\documentclass[11pt]{amsart}
\usepackage{amsmath, amsthm, amscd, amsfonts, amssymb, graphicx, color}
\usepackage[bookmarksnumbered, colorlinks, plainpages]{hyperref}
\hypersetup{colorlinks=true,linkcolor=red, anchorcolor=green, citecolor=cyan, urlcolor=red, filecolor=magenta, pdftoolbar=true}
\usepackage{tikz}

\textheight 22.5truecm \textwidth 15.5truecm
\setlength{\oddsidemargin}{0.35in}\setlength{\evensidemargin}{0.35in}

\setlength{\topmargin}{-.5cm}

\newtheorem{thm}{Theorem}[section]
\newtheorem{lemma}[thm]{Lemma}
\newtheorem{prop}[thm]{Proposition}
\newtheorem{cor}[thm]{Corollary}
\theoremstyle{definition}
\newtheorem{defn}[thm]{Definition}
\newtheorem{eg}[thm]{Example}

\theoremstyle{definition}
\newtheorem{rmk}[thm]{Remark}
\theoremstyle{definition}
\newtheorem{note}[thm]{Note}
\theoremstyle{definition}
\newtheorem{question}[thm]{Question}
\numberwithin{equation}{section}

\DeclareRobustCommand{\rchi}{{\mathpalette\irchi\relax}}
\newcommand{\irchi}[2]{\raisebox{\depth}{$#1\chi$}}
\DeclareRobustCommand{\rrho}{{\mathpalette\irrho\relax}}
\newcommand{\irrho}[2]{\raisebox{\depth}{$#1\rho$}}

\begin{document}
\title[Strongly irreducible factorization]{Strongly irreducible factorization of quaternionic operators and Riesz decomposition theorem}
\author{P. Santhosh Kumar}
\address{Indian Statistical Institute Bangalore, Statistics and Mathematics Unit, 8th Mile, Mysore Road, 
	Bengaluru 560 059, India}
\email{santhosh.uohmath@gmail.com}
\subjclass[2010]{Primary 47S10, 47A68; Secondary 47A15, 47B99}
\keywords{ Axially symmetric set, Quaternionic Hilbert space, Quaternionic normal operator, Spherical specturm,  Strongly irreducible operator, Riesz decomposition theorem}
\begin{abstract}
    Let $T$ be a bounded quaternionic normal operator on a right quaternionic Hilbert space $\mathcal{H}$. We show that $T$ can be factorized in a strongly irreducible sense, that is, for any $\delta >0$ there exist a compact operator $K$ with $\|K\|< \delta$, a partial isometry $W$ and a strongly irreducible operator $S$ on $\mathcal{H}$ such that 
		\begin{equation*}
		T = (W+K) S.
		\end{equation*}
	We illustrate our result with an example. We also prove a quaternionic version of the Riesz decomposition theorem and as a consequence, show that if the spherical spectrum of a bounded quaternionic operator (need not be normal) is disconnected by a pair of  disjoint axially symmetric closed subsets, then it  is strongly reducible.
\end{abstract}
\maketitle
\section{Introduction}  According to the Frobenius theorem for real division algebras, the algebra of Hamilton quaternions \cite{Hamilton} is the only finite dimensional associative division algebra that contains $\mathbb{R}$ and $\mathbb{C}$ as proper real subalgebras. Similar to the case of real and complex matrices, the theory of matrices over the real algebra of quaternions  has been well developed (see \cite{Adler, Finkelstein, Weigmann, Zhang} and references therein) in the literature. For example, some of the fundamental results like Schur's canonical form and Jordan canonical form are extended to matrices with quaternion entries \cite{Zhang}.  In particular,   the  Jordan canonical form shows that every square matrix over quaternions can be reduced under similarity to a direct sum of Jordan blocks.  The Jordan canonical form determines its complete similarity invariants and establishes the structure of quaternionic operator on finite dimensional right quaternionic Hilbert spaces. Another significant aspect in this direction is the diagonalization, through which the matrix is factorized interms of its restriction to eigenspaces. In particular, as in the case of complex matrices, every quaternionic normal matrix is  diagonalizable. It is worth mentioning the result due to  Weigmann \cite{Weigmann} that a quaternion matrix is normal if and only if its adjoint is a polynomial of given matrix with real coefficients. In this article, we concentrate on the class of normal operators on right quaternionic Hilbert spaces and their factorization in a strongly irreducible sense. For this, we adopt the notion of strong irreducibility  proposed by F. Gilfeather \cite{Gilfeather} and Z. J. Jiang \cite{Jiang}  to the class of quaternionic operators in order to replace the notion of  Jordan block for infinite dimensional right quaternionic Hilbert spaces.


On the other hand, the study of quaternionic normal operators gained much attention in the form of spectral theory for quantum theories and various versions of quaternionic functional calculus  (see \cite{Alpay-R, Colombo, Ghiloni} for details). As far as factorization of quaternionic operators concern, the well known polar decomposition theorem shows that every bounded or densely defined closed quaternionic operator can be decomposed as a product of a partial isometry and a positive operator \cite{Ghiloni, GandP}. Furthermore, the authors of \cite{GandP} provided a necessary and sufficient condition for any arbitrary factorization of densely defined closed quaternionic operator to be a polar decomposition. Note that the positive operator that appear in the polar decomposition is the modulus of the given quaternionic operator which has several reducing subsapces as in the complex case  \cite{Conway}. In summary, the polar decomposition establishes the factorization of quaternionic operator as a product of a partial isometry and an operator having several reducing subspaces.  A natural question that arises from  previous observation is the following:  ``whether a given quaternionic operator be decomposed as a product of a partial isometry and an operator with no reducing subspace?"  We answer this question for  quaternionic normal operators by proving a factorization in a strongly irreducible sense, by means of decomposing the given operator as a product of a sufficiently small  compact perturbation of a partial isometry and a strongly irreducible quaternionic operator. Our result is the quaternionic version (for normal operators) of the result proved recently in \cite{Tian} by G. Tian et al. In which, the authors employed the properties of Cowen-Douglas operators related to complex geometry on complex Hilbert spaces. Also see \cite{Luo} for such factorization in finite dimensional Hilbert spaces.

We organize this article in four sections. In the second section, we give some basic definitions and results that are useful for proving  our assertions. In the third section,  we prove factorization of quaternionic normal operator in a strongly irreducible sense. The final section is dedicated to Riesz decomposition theorem for quternionic operators and  provide a suffient condition for strong irreducibility.  


\vspace{0.5cm}
\section{Preliminaries}

An Irish mathematician Sir William Rowan Hamilton \cite{Hamilton} described this number system known as ``quaternions" as an extension of complex numbers. A quaternion is of the form:
\begin{equation*}
q = q_{0}+q_{1}i+q_{2}j+q_{3}k,
\end{equation*} 
where $q_{\ell} \in \mathbb{R}$ for $\ell = 0,1,2,3$ and $i,j,k$ are the fundemental quaternion units satisfying,
\begin{equation}\label{eq:multiplication}
i^{2}= j^{2} = k^{2}= -1 =  i\cdot j \cdot k.
\end{equation}
Note that the collection of all quaternions is denoted by $\mathbb{H}$ and it is a non-commutative real division algebra equipped with the usual vector space operations addition and scalar multiplication same as in the complex field $\mathbb{C}$, and with the ring multiplication  given by Equation (\ref{eq:multiplication}).

For every $q\in \mathbb{H}$, the real and the imaginary part of `$q$' is defined as $\text{re}(q) = q_{0}$ and $\text{im}(q) =q_{1}i+q_{2}j+q_{3}k$ respectively. Then the conjugate and the modulus of  $q \in \mathbb{H}$  is given respectively by 
\begin{equation*}
\overline{q} = q_{0}-(q_{1}i+q_{2}j+q_{3}k) \; \text{and}\;  |q|= \sqrt{q_{0}^{2}+q_{1}^{2}+q_{2}^{2}+q_{3}^{2}}.
\end{equation*} 
The set of all imaginary unit quaternions in $\mathbb{H}$ denoted by $\mathbb{S}$ and it is defined as
\begin{equation*}
\mathbb{S}:= \big\{q\in \mathbb{H}:\; \overline{q}=-q\; \&\; |q|=1\big\}= \big\{q\in \mathbb{H}:\; q^{2}=-1\big\}.
\end{equation*}
However $\mathbb{S}$ is a $2$- dimensional sphere in $\mathbb{R}^{4}$.
The real subalgebra of $\mathbb{H}$ generated by $\{1, m\}$, where $m\in \mathbb{S}$, is denoted by $\mathbb{C}_{m}: = \{\alpha + m\beta:\; \alpha, \beta \in \mathbb{R}\}$ called a slice of $\mathbb{H}$. It is immediate to see that, for every  $q \in \mathbb{H}$, there is a unique $m_{q}:= \frac{\text{im}(q)}{|\text{im}(q)|} \in \mathbb{S}$ such that 
\begin{equation*}
q = \text{re}(q) + m_{q} |\text{im}(q)|\; \in \mathbb{C}_{m_{q}} . 
\end{equation*} 
Consequently, $\mathbb{H}= \bigcup\limits_{m\in \mathbb{S}} \mathbb{C}_{m}$ and  $\mathbb{C}_{m} \cap \mathbb{C}_{n} = \mathbb{R}$, for all  $m \neq \pm n \in \mathbb{S}$. There is an equivalence relation on $\mathbb{H}$ given by 
\begin{equation*}
p \sim q  \Leftrightarrow \; p = s^{-1}qs, \; \text{for some}\; s\in \mathbb{H}\setminus \{0\}.
\end{equation*}
For every $q \in \mathbb{H}$, the equivalence class of $q$ is expressed as,
\begin{equation}\label{Equation: equivalenceclass}
[q] = \big\{p \in \mathbb{H}:\;  \text{re}(p) = \text{re}(q)\;\text{and}\; |\text{im}(p)|= |\text{im}(q)|\big\}. 
\end{equation}
\begin{defn}\cite{Ghiloni}\label{Definition: circularization}
	Let $S$ be a non-empty subset of $\mathbb{C}$. 
	\begin{enumerate}
	\item If $S$ is invariant under complex conjugation, then  the {\it circularization}  $\Omega_{S}$ of ${S}$ is defined by
	\begin{equation*}
	\Omega_{S} = \big\{\alpha + m \beta :\; \alpha, \beta \in \mathbb{R}, \alpha + i \beta \in S, m \in \mathbb{S}\big\}.
	\end{equation*}
        \item a subset $K$ of $\mathbb{H}$ is called {\it circular} or \emph{axially symmetric}, if $K = \Omega_{S}$, for some $S \subseteq \mathbb{C}$ which is invariant under complex conjugation. 	
        \end{enumerate}
        \end{defn}
       Note that by Definition \ref{Definition: circularization} and Equation (\ref{Equation: equivalenceclass}), we can express $\Omega_{S}$ as the union of the equivalence class of its elements,
       \begin{equation*}
	\Omega_{S} = \bigcup\limits_{z \in S}[z]. 
	\end{equation*}
It follows that the closure of $\Omega_{S}$ is the circularization of the closure of $S$. That is,
\begin{equation}\label{Equation: closureofOmegaS}
    \overline{\Omega}_{S} = \Omega_{\overline{S}}.
\end{equation}

The properties  described above are useful for later sections. A more detailed discussion about quaternions can be found in \cite{Adler,Colombo,Ghiloni,Zhang}.  Now we discuss some basic definitions and existing results related to quaternionic Hilbert spaces.

\begin{defn}\cite[section 2.2]{Ghiloni} \label{Definition: quaternionicHilbertspace}
	An  \emph{inner product} on a right $\mathbb{H}$-module $\mathcal{H}$ is a map 
	\begin{equation*}
	\langle \cdot, \cdot \rangle\colon \mathcal{H}\times \mathcal{H} \to \mathbb{H}
	\end{equation*}  
	satisfy the following properties,  
	\begin{enumerate}
		\item {\bf Positivity}: $\langle x,x\rangle \geq 0$ for all $x \in \mathcal{H}$. In particular,
		\begin{equation*}
		\langle x, x\rangle = 0 \;\; \text{if and only if }\;\; x = 0. 
		\end{equation*} 
		\item {\bf Right linearity}: $\langle x, yq+z\rangle = \langle x, y \rangle\; q + \langle x, z\rangle $, for all $x,y,z \in \mathcal{H}$, $q \in \mathbb{H}$.
		\item {\bf Quaternionic Hermiticity}: $\langle x,y\rangle = \overline{\langle y, x\rangle}$, for all $x,y \in \mathcal{H}$.
	\end{enumerate}
	The pair $(\mathcal{H}, \langle \cdot, \cdot \rangle)$ is called \emph{quaternionic pre-Hilbert space}. Moreover, $\mathcal{H}$ is said to be 
	\begin{enumerate}
		\item[(i)]\emph{quaternionic Hilbet space}, if $\mathcal{H}$ is complete with respect to the norm induced from the inner product $\langle \cdot, \cdot\rangle$, which is defined by
		\begin{equation*}
		\|x\|= \sqrt{\langle x, x\rangle}, \; \text{for all}\; x \in \mathcal{H}.
		\end{equation*}
		\item[(ii)] \emph{separable}, if $\mathcal{H}$ has a countable dense subset.
	\end{enumerate}
Furthermore, for any subset $\mathcal{N}$ of $\mathcal{H}$, the \emph{span} of $\mathcal{N}$ is defined as
		\begin{equation*}
		span\ \mathcal{N}:= \Big\{ \sum\limits_{\ell = 1}^{n} x_{\ell} q_{\ell}; \; x_{\ell} \in \mathcal{N}, n \in \mathbb{N} \Big\}.
		\end{equation*}
The orthogonal complement of a subspace $\mathcal{M}$ of $\mathcal{H}$ is, 
\begin{equation*}
\mathcal{M}^{\bot} = \big\{ x \in \mathcal{H};\; \langle x, y\rangle = 0, \; \text{for every}\; y \in \mathcal{M}\big\}.
\end{equation*}
\end{defn}

\begin{note}
	The inner product $\langle \cdot, \cdot \rangle $ defined on $\mathcal{H}$ satisfies Cauchy-Schwarz inequality:
	\begin{equation*}
	|\langle x,y\rangle|^{2} \leq \langle x,x\rangle \langle y,y\rangle, \; \text{for all}\; x,y \in \mathcal{H}.
	\end{equation*}
\end{note}
\begin{defn}\cite{Ghiloni}
	Let $\big(\mathcal{H}, \langle \cdot, \cdot \rangle\big)$ be a quaternionic Hilbert space. A subset $\mathcal{N}$ of $\mathcal{H}$ with the property that 
	\[
	\langle z, z^{\prime}\rangle = \left\{ \begin{array}{cc}
	1, & \mbox{if $z = z^{\prime}$}  \\
	0,& \mbox{if $z \neq z^{\prime}$} 
	\end{array} \right.
	\]
	is said to be  \emph{Hilbert basis} if for every $x,y\in \mathcal{H}$, the series $\sum\limits_{z \in \mathcal{N}} \langle x, z \rangle \langle z, y \rangle$ converges absolutely and it holds:
	\begin{equation*}
	\langle x, y \rangle = \sum\limits_{z \in \mathcal{N}} \langle x, z \rangle \langle z, y \rangle.
	\end{equation*}
	Equivalentely, $\overline{span}\ \mathcal{N}= \mathcal{H}$.
\end{defn}
\begin{rmk}
By \cite[Proposition 2.6]{Ghiloni}, every quaternionic Hilbert space $\mathcal{H}$ has a Hilbert basis $\mathcal{N}$. For every $x \in \mathcal{H}$, it is uniquely decomposed as, 
\begin{equation*}
x= \sum\limits_{z \in \mathcal{N}} z \langle z, x\rangle.
\end{equation*} 
\end{rmk}
\begin{eg}\cite{GandP-MultiplicationForm}
	Let $\mu$ be a lebesgue measure on $[0,1]$ and the set of all $\mathbb{H}$-valued square integrable $\mu$-measurable functions on $[0, 1]$ is defined as,
	\begin{equation*}
	L^{2}\big( [0,1]; \mathbb{H}; \mu\big) = \Big\{f\colon [0,1] \to \mathbb{H}\ ;\; \int\limits_{0}^{1}|f(x)|^{2}\; dx < \infty \Big\}.
	\end{equation*}
	It is a right quaternionic Hilbert space with respect to the inner product given by 
	\begin{equation*}
	\langle f, g\rangle = \int\limits_{0}^{1}\overline{f(x)}g(x)\; dx, \; \text{for all}\; f,g \in L^{2}\big( [0,1]; \mathbb{H}; \mu\big).
	\end{equation*}
	Let us fix $m \in \mathbb{S}$. If we define $ e_{r}^{(m)}(x) = \exp\{2\pi m rx\}$, \text{for all} $x \in [0,1]$ and $r \in \mathbb{Z}$. Then the set  $\mathcal{N}= \{e_{r}^{(m)}:\; r \in \mathbb{Z}\}$ is an orthornormal system in $L^{2}\big( [0,1]; \mathbb{H}; \mu\big)$. Further, by the Stone-Weierstrass theorem $\mathcal{N}$ is a Hilbert basis in $L^{2}\big( [0,1]; \mathbb{H}; \mu\big)$. Since $\mathcal{N}$ is countable, we conclude that $L^{2}\big( [0,1]; \mathbb{H}; \mu\big)$ is a separable quaternionic Hilbert space. 
\end{eg}

\begin{defn}
	Let $\mathcal{H}$ be a quaternionic Hilbert space. A map $T \colon \mathcal{H} \to \mathcal{H}$ is said to be {\it right $\mathbb{H}$- linear} or {\it quaternionic operator}, if 
	\begin{equation*}
	T(x+yq) = T(x) + T(y)\; q,\; \text{for all} \; x \in \mathcal{H}.	
	\end{equation*}
	Moreover, $T$ is said to be {\it bounded} or {\it continuous}, if there exist a $\alpha >0$ such that 
	\begin{equation*}
	\|Tx\| \leq \alpha \|x\|, \; \text{for all}\; x \in \mathcal{H}.
	\end{equation*}
	We denote the class of all bounded operators on $\mathcal{H}$ by $\mathcal{B}(\mathcal{H})$ and it is a real Banach algebra with respect to the operator norm defined by
	\begin{equation*}
	\|T\|= \sup\big\{\|Tx\|:\; x\in \mathcal{H},\; \|x\|\leq 1\big\}.
	\end{equation*}
\end{defn}
For every $T \in \mathcal{B}(\mathcal{H})$, by the quaternionic version of the Riesz representation theorem \cite[Theorem 2.8]{Ghiloni}, there exists a unique operator denoted by $T^{\ast} \in \mathcal{B}(\mathcal{H})$, called the {\it adjoint} of $T$ satisfying,
\begin{equation*}
\langle x, Ty\rangle = \langle T^{\ast}x, y\rangle, \; \text{for all}\; x,y \in \mathcal{H}. 
\end{equation*}
If $T \in \mathcal{B}(H)$, then the null space of $T$ is defined by $N(T) = \{x\in \mathcal{H}:\; Tx=0\}$ and the range space of $T$ is defined by $R(T) = \{Tx:\; x\in \mathcal{H}\}$. A closed subspace $\mathcal{M}$ of $\mathcal{H}$ is said to be \emph{invariant} subspace of $T$, if 
\begin{equation*}
    T(x) \in \mathcal{M}, \;\; \text{for every}\;\; x \in \mathcal{M}. 
\end{equation*}
Furthermore, $\mathcal{M}$ is said to be \emph{reducing subspace} of $T$ if $\mathcal{M}$ is an invariant subspace of both $T$ and $T^{\ast}$.

\begin{defn}
	Let $T \in \mathcal{B}(\mathcal{H})$. Then $T$ is said to be
	\begin{enumerate}
		\item \emph{self-adjoint}, if $T^{\ast}=T$,
		\item \emph{anti self-adjoint}, if $T^{\ast} = -T$, 
		\item {\it normal} if $T^{\ast}T = TT^{\ast}$,
		\item \emph{positive}, if $T^{\ast}=T$ and $ \langle x, Tx\rangle \geq 0$ for all $x \in \mathcal{H}$,
		\item \emph{orthogonal projection}, if $T^{\ast}=T$ and $T^{2}=T$,
		\item \emph{partial isometry}, if $\|Tx\| = \|x\|$, for all $x \in N(T)^{\bot}$,
		\item \emph{unitary}, if $T^{\ast}T = TT^{\ast} = I$.
	\end{enumerate}
\end{defn}

Suppose that $T \in \mathcal{B}(\mathcal{H})$ is positive,  then by \cite[Theorem 2.18]{Ghiloni}, there exists a unique positive operator $S \in \mathcal{B}(\mathcal{H})$ such that $S^{2}= T$. Such an operator $S$ is called the \emph{positive square root} of $T$ and it is dentoed by $S := T^{\frac{1}{2}}$. In fact, for every $T \in \mathcal{B}(\mathcal{H})$,  the modulus $|T|$ is defined as the positive square root of $T^{\ast}T$, that is, $|T| := (T^{\ast}T)^{\frac{1}{2}}$.

We know from the well known \emph{Cartesian decomposition} that every bounded normal operator on a complex Hilbert space can be decomposed uniquely as $A + i B$, where $A, B$ are bounded self-adjoint operators. There is a quaternionic analog of this result in which the role of `$i$' is replaced by an anti self-adjoint unitary operator. 
\begin{thm}\cite[Theorem 5.9]{Ghiloni} \label{Theorem: Cartesian}
	Let $T \in \mathcal{B}(\mathcal{H})$ be normal. Then there exists an anti self-adjoint unitary operator $J \in \mathcal{B}(\mathcal{H})$ such that $TJ = JT$, $T^{\ast}J = JT^{\ast}$ and 
	\begin{equation*}
	T = \frac{1}{2}(T+T^{\ast}) +\frac{1}{2}\; J |T-T^{\ast}|.
	\end{equation*}
 Here $J$ is uniquely determined by $T$ on $N(T-T^{\ast})^{\bot}$. Moreover, the operators $(T+T^{\ast}), \;|T-T^{\ast}| $ and $J$ commute mutually. 
\end{thm}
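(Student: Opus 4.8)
The plan is to imitate the complex Cartesian decomposition and then deal separately with the "extra" quaternionic phenomenon. Put $A := \tfrac{1}{2}(T+T^{\ast})$ and $B := \tfrac{1}{2}(T-T^{\ast})$, so that $A^{\ast}=A$, $B^{\ast}=-B$, $T = A+B$ and $T^{\ast}=A-B$; expanding $T^{\ast}T$ and $TT^{\ast}$ shows $T$ is normal if and only if $AB = BA$. Since $B^{\ast}=-B$ we have $B^{\ast}B = BB^{\ast} = -B^{2}$, so $B$ is normal and $|B| := (B^{\ast}B)^{1/2} = (-B^{2})^{1/2}$ is positive with $|T-T^{\ast}| = 2|B|$. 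Because $A$ commutes with $B$, it commutes with $B^{\ast}$, $B^{2}$ and, via the continuous functional calculus, with $|B|$; hence $A$, $B$, $|B|$ commute mutually and the closed subspace $N(B) = N(|B|) = N(T-T^{\ast})$ reduces each of them, as does its orthogonal complement.

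Next I would build the desired operator in two pieces relative to the reducing decomposition $\mathcal{H} = N(B)^{\bot}\oplus N(B)$. On $N(B)^{\bot}$: the polar decomposition $B = J_{0}|B|$ gives a partial isometry $J_{0}$ with initial and final space $N(B)^{\bot}$, using that $B$ is normal so $\overline{R(B)} = \overline{R(B^{\ast})} = N(B)^{\bot} = N(B^{\ast})^{\bot}$. Since $B$ is normal, $J_{0}$ commutes with every bounded operator commuting with both $B$ and $B^{\ast}=-B$, in particular with $A$, and it commutes with $|B|$. Taking adjoints in $B = J_{0}|B|$ gives $B^{\ast} = |B|J_{0}^{\ast}$, and comparing $|B|J_{0} = B = -B^{\ast} = -|B|J_{0}^{\ast}$ with injectivity of $|B|$ on $N(B)^{\bot}$ yields $J_{0}^{\ast} = -J_{0}$; thus $J_{0}$ is an anti self-adjoint unitary of $N(B)^{\bot}$.

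On $N(B)$ the operator $T$ restricts to the self-adjoint operator $A_{0} := A|_{N(B)}$ (because $B$ vanishes there), and here I would invoke the quaternionic spectral theorem in multiplication-operator form — as used for the example, cf. \cite{GandP-MultiplicationForm} — to identify $N(B)$ unitarily with some $L^{2}(X,\nu;\mathbb{H})$ on which $A_{0}$ becomes multiplication $M_{\phi}$ by a real-valued function $\phi$. On $L^{2}(X,\nu;\mathbb{H})$ the map $f\mapsto\big(x\mapsto i\,f(x)\big)$ is right $\mathbb{H}$-linear by associativity, is isometric, squares to $-I$, is anti self-adjoint, and commutes with $M_{\phi}$ since $\phi$ is real-valued; pulling it back produces an anti self-adjoint unitary $J_{1}$ on $N(B)$ commuting with $A_{0}$. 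Setting $J := J_{0}\oplus J_{1}$, one checks: $J$ is an anti self-adjoint unitary of $\mathcal{H}$; $J$ commutes with $A$ (it does on each summand, and both summands reduce $A$) and with $B$ ($J_{0}$ does on $N(B)^{\bot}$, and $B=0$ on $N(B)$), hence $JT = TJ$ and $JT^{\ast}=T^{\ast}J$; and $J|B| = J_{0}|B| = B$, so
\begin{equation*}
\tfrac{1}{2}(T+T^{\ast}) + \tfrac{1}{2}\,J\,|T-T^{\ast}| \;=\; A + \tfrac{1}{2}J(2|B|) \;=\; A + B \;=\; T.
\end{equation*}
The mutual commutativity of $T+T^{\ast}=2A$, $|T-T^{\ast}|=2|B|$ and $J$ has already been recorded, and uniqueness of $J$ on $N(T-T^{\ast})^{\bot} = N(B)^{\bot}$ follows because any admissible $J'$ satisfies $J'|B| = B = J_{0}|B|$ while $R(|B|)$ is dense in $N(B)^{\bot}$.

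I expect the genuine obstacle to be the construction of $J_{1}$ on $N(B)$: in contrast with the complex case there is no ambient imaginary unit available, and right $\mathbb{H}$-linearity prevents one from simply declaring "multiplication by $i$" on an arbitrary Hilbert basis in a manner that commutes with $A_{0}$. The multiplication-operator representation of the self-adjoint operator $A_{0}$ is precisely what supplies a commuting anti self-adjoint unitary, and it is the only place where essentially quaternionic input is used. A secondary technical point, routine but needing care, is verifying that the polar part $J_{0}$ commutes with $A$ and $|B|$ and satisfies $J_{0}^{\ast}=-J_{0}$, which relies on normality of $B$ together with the functional calculus for quaternionic operators.
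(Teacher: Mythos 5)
This statement is imported verbatim from \cite[Theorem 5.9]{Ghiloni}; the paper contains no proof of it, so your attempt can only be compared with the original source. Your reconstruction follows essentially the same route as that source: split $T=A+B$ into its self-adjoint and anti self-adjoint parts, observe that normality is exactly $AB=BA$, take $J_{0}$ from the polar decomposition of $B$ (which is anti self-adjoint, hence normal, and satisfies $J_{0}^{\ast}=-J_{0}$ and commutes with $A$ and $|B|$), and then graft onto $N(B)$ a separate anti self-adjoint unitary commuting with the self-adjoint restriction $A_{0}=T|_{N(B)}$. The verifications you sketch for $J_{0}^{\ast}=-J_{0}$, for $J_{0}A=AJ_{0}$, and for uniqueness on $\overline{R(|B|)}=N(B)^{\bot}$ all go through, and you correctly identify the construction of $J_{1}$ on $N(B)$ as the only genuinely quaternionic step.

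The one point you must repair is precisely that step: you produce $J_{1}$ by invoking the multiplication form of the quaternionic spectral theorem \cite{GandP-MultiplicationForm}, but that theorem is itself established for normal operators by first applying the Cartesian decomposition you are proving (it is the standard route from a normal $T$ to a complex linear operator on a slice Hilbert space), so as written the argument is circular. Since $A_{0}$ is self-adjoint, the circularity is avoidable: either quote the result of \cite{Ghiloni}, logically prior to their Theorem 5.9, that every self-adjoint operator on a quaternionic Hilbert space commutes with some anti self-adjoint unitary, or build the $L^{2}$-model for $A_{0}$ directly by decomposing $N(B)$ into cyclic subspaces generated by real polynomials in $A_{0}$ --- a construction that uses only self-adjointness and never the decomposition being proved. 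With that substitution your proof is complete and matches the source's argument.
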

\begin{defn}\cite[Definition 4.8.1]{Colombo}
	For every $T \in \mathcal{B}(\mathcal{H})$ and $q \in \mathbb{H}$, let us associate an operator $\Delta_{q}(T): = T^{2} -2 re(q)T + |q|^{2}I$. Then
	\begin{enumerate}
		\item the {\it spherical spectrum} of $T$ is defined as, 
		\begin{equation*}
		\sigma_{S}(T) = \big\{q\in \mathbb{H}:\; \Delta_{q}(T) \; \text{is not invertible in}\; \mathcal{B}(\mathcal{H})\big\}.
		\end{equation*}
		So the \emph{spherical resolvent} of $T$ denoted by $\rho_{S}(T)$ is the complement of $\sigma_{S}(T)$. That is, 
		\begin{equation*}
		    \rrho_{S}(T):= \mathbb{H}\setminus \sigma_{S}(T).
		\end{equation*}
		\item The \emph{spherical point spectrum} of $T$ is defined by 
		\begin{equation*}
		\sigma_{p^{S}}(T):= \big\{q \in \mathbb{H}:\; N(\Delta_{q}(T)) \neq \{0\}\big\}. 
		\end{equation*}
	\end{enumerate}
	The spherical spectrum $\sigma_{S}(T)$ is a nonempty compact subset of $\mathbb{H}$. 
\end{defn}
Now we recall the notion of slice Hlibert space associated to the given quaternionic Hilbert space $\mathcal{H}$, anti self-adjoint unitary operator $J\in \mathcal{B}(\mathcal{H})$ and $m \in \mathbb{S}$. 
\begin{defn} \cite[Definition 3.6]{Ghiloni}
	Let $m \in \mathbb{S}$ and $J \in \mathcal{B}(\mathcal{H})$ be an anti self-adjoint unitary operator. These subsets $\mathcal{H}^{Jm}_{\pm}$ of $\mathcal{H}$ associated with $J$ and $m$ are defined by setting
	\begin{equation*}
	\mathcal{H}^{Jm}_{\pm}: = \{x\in \mathcal{H}\; :\; J(x) = \pm\; x\cdot m \}.
	\end{equation*}
\end{defn}
\begin{rmk}
	For each $x \in \mathcal{H}$ and $m \in \mathbb{S}$, define $x_{\pm}:= \frac{1}{2}(x\mp Jx\cdot m)$, then 
	\begin{equation}\label{Equation: Jx}
	J(x_{\pm}) = \frac{1}{2}\big(Jx \mp J^{2}x\cdot m\big) = \frac{1}{2}\big(Jx \pm x\cdot m\big) = \pm \; x_{\pm}\cdot m.
	\end{equation}
	It implies that $x_{\pm} \in \mathcal{H}^{Jm}_{\pm}$. Since $x \mapsto \pm\; x\cdot m$ and $x \mapsto Jx$ are continuous, then $\mathcal{H}^{Jm}_{\pm}$ is a non-trivial closed subsets of $\mathcal{H}$.  In fact, we see that the inner product on $\mathcal{H}$ restricted to $\mathcal{H}^{Jm}_{\pm}$ is $\mathbb{C}_{m}$-valued as follows: let $\alpha +m \beta \in \mathbb{C}_{m}$ and $u,v\in \mathcal{H}^{Jm}_{\pm}$ for $m \in \mathbb{S}$, then 
	\begin{align*}
	\langle u, v\rangle (\alpha \pm m \beta) 
	&= \alpha\langle u, v\rangle \pm \beta  \langle u, v\cdot m\rangle\\
	&= \alpha\langle u, v\rangle \pm \beta  \langle u, Jv\rangle\\
	&=  \alpha\langle u, v\rangle \mp \beta  \langle Ju, v\rangle \;\;\;\; (\text{since} \;J^{\ast} = - J)\\
	&=\alpha\langle u, v\rangle \mp  \beta  \langle  u\cdot m, v\rangle\\
	&=(\alpha \pm m \beta )\langle u, v\rangle \; \;\;\;\;\;\;\; (\text{since} \; \overline{m} = -m ). 
	\end{align*}
	Then by being a closed subspace of $\mathcal{H}$, we conclude that $\mathcal{H}^{Jm}_{\pm}$ is a Hilbert space over the field $\mathbb{C}_{m}$. These Hilbert spaces $\mathcal{H}^{Jm}_{\pm}$ are known as \emph{slice Hilbert spaces}. 	
	As a $\mathbb{C}_{m}$- Hilbert space $\mathcal{H}$ has the following decomposition  \cite[Lemma 3.10]{Ghiloni}
	\begin{equation}\label{Equation: directsum}
	\mathcal{H} = \mathcal{H}^{Jm}_{+} \oplus \mathcal{H}^{Jm}_{-}, \; \text{for any}\; m \in \mathbb{S}.
	\end{equation}
	Furthermore, if $\mathcal{N}$ is a Hilbert basis of $\mathcal{H}^{Jm}_{+}$, then $\mathcal{N}\cdot n = \{z\cdot n:\; z\in \mathcal{N}\}$, where $n \in \mathbb{S}$ such that $mn = -nm$, is a Hilbert basis of $\mathcal{H}^{Jm}_{-}$. From Equation (\ref{Equation: directsum}), it follows that $\mathcal{N}$ is also Hilbert basis of $\mathcal{H}$.
\end{rmk}

We denote the class of all bounded $\mathbb{C}_{m}$- linear operators on $\mathcal{H}^{Jm}_{+}$ by $\mathcal{B}(\mathcal{H}^{Jm}_{+})$. The following proposition develop a technique to extend $\mathbb{C}_{m}$- linear operator on $\mathcal{H}^{Jm}_{+}$ (for any $m \in \mathbb{S}$) to the quaternionic operator on $\mathcal{H}$.

\begin{prop}\label{Proposition: Key}\cite{Ghiloni}
	Let $J \in \mathcal{B}(\mathcal{H})$ be anti self-adjoint unitary and $m \in \mathbb{S}$. If  $T\in \mathcal{B}(\mathcal{H}^{Jm}_{+})$, then there exist a unique  quaternionic operator $\widetilde{T} \in \mathcal{B}(\mathcal{H})$ such that $\widetilde{T}(x) = T(x)$, for all $x \in \mathcal{H}^{Jm}_{+}$. The following additional facts hold.
	\begin{enumerate}
		\item $\|\widetilde{T}\| = \|T\|$.
		\item $J\widetilde{T} = \widetilde{T}J$.
		\item $(\widetilde{T})^{\ast} = \widetilde{T^{\ast}}$.
		\item If $S \in \mathcal{B}(\mathcal{H}^{Jm}_{+})$, then $\widetilde{ST} = \widetilde{S} \widetilde{T}$
		\item If $T$ is invertible, then $\widetilde{T}$ is invertible and the inverse is given by
		\begin{equation*}
		(\widetilde{T})^{-1} = \widetilde{{T^{-1}}}.
		\end{equation*}
	\end{enumerate}
	On the other hand, let $V \in \mathcal{B}(\mathcal{H})$, then $V = \widetilde{U}$ for some $U \in \mathcal{B}(\mathcal{H}^{Jm}_{+})$ if and only if $JV = VJ$.
\end{prop}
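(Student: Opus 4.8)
The plan is to build $\widetilde T$ by hand from the $\mathbb C_m$-Hilbert space splitting $\mathcal H=\mathcal H^{Jm}_+\oplus\mathcal H^{Jm}_-$ of Equation (\ref{Equation: directsum}) and then read off all the listed properties, together with uniqueness, from the construction. Fix $n\in\mathbb S$ with $mn=-nm$. Then right multiplication $R_n(x):=x\cdot n$ is a bounded $\mathbb R$-linear bijection of $\mathcal H^{Jm}_+$ onto $\mathcal H^{Jm}_-$ whose inverse is right multiplication by $-n$ (because $n^2=-1$), and $R_n(x\cdot q)=R_n(x)\cdot\overline q$ for $q\in\mathbb C_m$ — likewise for $R_n^{-1}$ — because $qn=n\overline q$. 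Writing each $x\in\mathcal H$ uniquely as $x=x_++x_-$ with $x_\pm\in\mathcal H^{Jm}_\pm$, I set
\[
\widetilde T(x):=T(x_+)+\big(T(R_n^{-1}x_-)\big)\cdot n .
\]
This is visibly well defined, restricts to $T$ on $\mathcal H^{Jm}_+$, and sends $\mathcal H^{Jm}_+$ into $\mathcal H^{Jm}_+$ and $\mathcal H^{Jm}_-$ into $\mathcal H^{Jm}_-$.

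Next I would verify that $\widetilde T$ is right $\mathbb H$-linear. Additivity is clear, and since $\mathbb H=\mathbb C_m\oplus\mathbb C_m\,n$ it suffices to check $\widetilde T(x\cdot q)=\widetilde T(x)\cdot q$ for $q\in\mathbb C_m$ and for $q=n$. For $q\in\mathbb C_m$ one uses $(x\cdot q)_\pm=x_\pm\cdot q$ and $R_n^{-1}(x_-\cdot q)=(R_n^{-1}x_-)\cdot\overline q$ together with the $\mathbb C_m$-linearity of $T$; for $q=n$ one computes from $n^2=-1$ that $(x\cdot n)_+=-R_n^{-1}x_-$ and $(x\cdot n)_-=x_+\cdot n$, whereupon both $\widetilde T(x\cdot n)$ and $\widetilde T(x)\cdot n$ reduce to $T(x_+)\cdot n-T(R_n^{-1}x_-)$. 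For the norm estimate, observe that $\mathrm{re}\,\langle a,b\rangle=0$ whenever $a\in\mathcal H^{Jm}_+$ and $b\in\mathcal H^{Jm}_-$, so the Pythagorean identity $\|u_++u_-\|^2=\|u_+\|^2+\|u_-\|^2$ holds on the splitting; combined with $\|R_nv\|=\|v\|$ this yields
\[
\|\widetilde T x\|^2=\|Tx_+\|^2+\|T(R_n^{-1}x_-)\|^2\le\|T\|^2\big(\|x_+\|^2+\|x_-\|^2\big)=\|T\|^2\|x\|^2 ,
\]
so $\|\widetilde T\|\le\|T\|$; the reverse inequality is immediate since $\widetilde T$ extends $T$, which gives (1). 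Uniqueness is then forced: any $V\in\mathcal B(\mathcal H)$ extending $T$ must satisfy $V(x_++w\cdot n)=T(x_+)+(Tw)\cdot n$ for $w\in\mathcal H^{Jm}_+$ by right $\mathbb H$-linearity, so $V=\widetilde T$.

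Properties (2), (4), (5) then come essentially for free. For (2) one checks $J\widetilde T=\widetilde T J$ slice by slice, using $Jx_+=x_+\cdot m$, $Jx_-=-x_-\cdot m$ and $mn=-nm$. For (4), $\widetilde S\,\widetilde T\in\mathcal B(\mathcal H)$ is right $\mathbb H$-linear and agrees with $ST$ on $\mathcal H^{Jm}_+$, hence equals $\widetilde{ST}$ by uniqueness; and (5) follows by applying (4) to $TT^{-1}=T^{-1}T=I$, since $\widetilde I=I$ by uniqueness. Finally, since $\widetilde T$ is bounded, right $\mathbb H$-linear and equal to $T$ on a Hilbert basis $\mathcal N$ of $\mathcal H^{Jm}_+$ — which by the Remark preceding the statement is also a Hilbert basis of $\mathcal H$ — one obtains the expansion $\widetilde T x=\sum_{z\in\mathcal N}(Tz)\langle z,x\rangle$ for every $x\in\mathcal H$.

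With this expansion, property (3) follows: using the adjoint identity $\langle T^*z,z'\rangle=\langle z,Tz'\rangle$ in the $\mathbb C_m$-Hilbert space $\mathcal H^{Jm}_+$ (all these brackets lie in $\mathbb C_m$), a rearrangement of absolutely convergent double series gives $\langle\widetilde{T^*}x,y\rangle=\langle x,\widetilde Ty\rangle$ for all $x,y$, i.e.\ $(\widetilde T)^*=\widetilde{T^*}$. It remains to treat the converse assertion. If $JV=VJ$, then for $x\in\mathcal H^{Jm}_+$ right $\mathbb H$-linearity of $V$ gives $J(Vx)=V(Jx)=V(x\cdot m)=(Vx)\cdot m$, so $V$ maps $\mathcal H^{Jm}_+$ into itself; hence $U:=V|_{\mathcal H^{Jm}_+}$ belongs to $\mathcal B(\mathcal H^{Jm}_+)$, $V$ is a bounded right $\mathbb H$-linear extension of $U$, and therefore $V=\widetilde U$ by uniqueness, while the reverse implication is exactly (2). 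The one genuinely delicate point in this scheme is the right $\mathbb H$-linearity of $\widetilde T$ — precisely, that right multiplication by $n$ interchanges the summands $\mathcal H^{Jm}_\pm$ in exactly the way the defining formula demands, which hinges on $n^2=-1$ and $nm=-mn$; everything else is routine bookkeeping with the orthogonal splitting and the uniqueness clause.
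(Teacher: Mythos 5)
Your proof is correct, and it is essentially the same approach as the paper's: the paper cites this proposition from Ghiloni--Moretti--Perotti without proof, but the explicit map you construct, $\widetilde T(x)=T(x_+)+T(R_n^{-1}x_-)\cdot n$, coincides (since $R_n^{-1}x_-=-x_-\cdot n$) with the extension formula $\widetilde T(x)=T(x_+)-T(x_-\cdot n)\cdot n$ that the paper records immediately after the statement. Your verifications --- right $\mathbb{H}$-linearity checked separately on $\mathbb{C}_m$ and on $n$, the Pythagorean identity via $\operatorname{re}\langle a,b\rangle=0$ across $\mathcal{H}^{Jm}_\pm$, uniqueness from $\mathcal{H}^{Jm}_-=\mathcal{H}^{Jm}_+\cdot n$, and the derivation of (2)--(5) and the converse from uniqueness --- all check out.
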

Precisely, the extension $\widetilde{T}$ of the operator $T$ is defined as,
\begin{align*}
\widetilde{T}(x) = \widetilde{T}(x_{+}+x_{-}) &= \widetilde{T}(x_{+}) + \widetilde{T}(x_{-})\\
&= \widetilde{T}(x_{+}) - \widetilde{T}(x_{-}\cdot n)\cdot n\\
&= T(x_{+}) - T(x_{-}\cdot n)\cdot n,
\end{align*}
for all $x = x_{+} + x_{-} \in \mathcal{H}^{Jm}_{+} \oplus \mathcal{H}^{Jm}_{-}$.

\begin{note} \label{Note: Normal}
In the case of normal operator $T \in \mathcal{B}(\mathcal{H})$, there exists an anti self-adjoint unitary operator $J \in \mathcal{B}(\mathcal{H})$ commutes with $T$  by Theorem \ref{Theorem: Cartesian}. Then by Proposition \ref{Proposition: Key} there is a complex linear operator, we denote it by  $T_{+} \in \mathcal{B}(\mathcal{H}^{Ji}_{+})$  such that $T = \widetilde{T}_{+}$.	
\end{note}
\section{Factorization in a strongly irreducible sense}
One of the fundamental result in the direction of factorizing  quaternionic operators is the well known polar decomposition theorem \cite{Ghiloni, GandP}. It states that if $T$ is bounded or densely defined closed operator on a right quaternionic Hilbert space $\mathcal{H}$, then there exists a unique partial isometry $W_{0} \in \mathcal{B}(\mathcal{H}$) satisfying 
\begin{equation}\label{Equation: partial}
T = W_{0}|T| \;\; \text{and }\; N(T)=N(W_{0}).
\end{equation}

Recently, the authors of \cite{GandP} obtained a necessary and sufficient condition for any arbitrary decomposition to coincide with the polar decomposition given in Equation (\ref{Equation: partial}). We recall the result here.
\begin{thm}\cite[Theorem 5.13]{GandP}
	Let $T$ be a bounded or densely defined closed operator defined on a right quaternionic Hilbert space $\mathcal{H}$. If  $W \in \mathcal{B}(\mathcal{H})$ is a partial isometry satisfying $T = W|T|$, then $W = W_{0}$ if and only if either $N(T) = \{0\}$ or $R(T)^{\bot} = \{0\}$, where $W_{0}$ is the partial isometry satisfying Equation (\ref{Equation: partial}).
\end{thm}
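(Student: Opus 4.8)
The plan is to pin down exactly how much freedom there is in a partial isometry $W$ with $T = W|T|$, and then to see that this freedom collapses precisely under the stated hypotheses. First I would record the standard facts about $|T| = (T^{\ast}T)^{1/2}$: one has $N(|T|) = N(T)$ and $\overline{R(|T|)} = N(|T|)^{\perp} = N(T)^{\perp}$, and the canonical $W_{0}$ of Equation (\ref{Equation: partial}) is the partial isometry with $N(W_{0}) = N(T)$, hence with initial space $N(T)^{\perp} = \overline{R(|T|)}$ and final space $R(W_{0}) = \overline{R(T)}$, uniquely determined on $R(|T|)$ by $W_{0}\,|T|x = Tx$. Now if $W \in \mathcal{B}(\mathcal{H})$ is any partial isometry with $T = W|T|$, then $W\,|T|x = Tx = W_{0}\,|T|x$ for every $x$, so $W$ and $W_{0}$ agree on $R(|T|)$ and hence, by boundedness, on $\overline{R(|T|)} = N(T)^{\perp}$. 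Thus $W = W_{0}$ is equivalent to $W$ vanishing on $N(T)$, and the whole statement reduces to: \emph{every} admissible $W$ vanishes on $N(T)$ if and only if $N(T) = \{0\}$ or $R(T)^{\perp} = \{0\}$.

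For the ``if'' direction the case $N(T) = \{0\}$ is now immediate. Suppose instead $R(T)^{\perp} = \{0\}$, i.e. $\overline{R(T)} = \mathcal{H}$, and let $W$ be admissible. Since $W$ coincides on $N(T)^{\perp}$ with $W_{0}$, which is isometric there, $W$ is isometric on $N(T)^{\perp}$; as a partial isometry can be isometric on a subspace only if that subspace lies in $N(W)^{\perp}$, this gives $N(W) \subseteq N(T)$. Moreover $W\big(N(T)^{\perp}\big) = W_{0}\big(N(T)^{\perp}\big) = \overline{R(T)} = \mathcal{H}$. Writing $N(W)^{\perp} = N(T)^{\perp} \oplus \mathcal{K}$ with $\mathcal{K} := N(W)^{\perp} \cap N(T)$, and using that $W^{\ast}W$ is the orthogonal projection onto $N(W)^{\perp}$ (so $W$ preserves inner products on $N(W)^{\perp}$), we get $W(\mathcal{K}) \perp W\big(N(T)^{\perp}\big) = \mathcal{H}$, hence $W(\mathcal{K}) = \{0\}$ and therefore $\mathcal{K} = \{0\}$. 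So $N(W) = N(T) = N(W_{0})$, $W$ vanishes on $N(T)$, and $W = W_{0}$.

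For the ``only if'' direction I would argue by contraposition: given $N(T) \neq \{0\}$ and $R(T)^{\perp} \neq \{0\}$, I would exhibit an admissible $W \neq W_{0}$. Pick unit vectors $u \in N(T)$ and $v \in R(T)^{\perp}$ and set $Wx := W_{0}x + v\,\langle u, x\rangle$. This is right $\mathbb{H}$-linear (the map $x \mapsto v\,\langle u, x\rangle$ is, by right linearity of $\langle u, \cdot\rangle$ and associativity of the right action) and bounded, and $T = W|T|$ since $\langle u, |T|y\rangle = \langle |T|u, y\rangle = 0$ for all $y$, because $u \in N(|T|) = N(T)$. To check $W$ is a partial isometry, write $x = x' + x''$ with $x' \in N(T)^{\perp}$ and $x'' \in N(T)$; then $Wx = W_{0}x' + v\,\langle u, x''\rangle$ with $W_{0}x' \in \overline{R(T)}$ orthogonal to $v \in R(T)^{\perp}$, so $\|Wx\|^{2} = \|x'\|^{2} + |\langle u, x''\rangle|^{2}$. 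By the quaternionic Cauchy--Schwarz inequality this equals $\|x\|^{2} = \|x'\|^{2} + \|x''\|^{2}$ exactly when $|\langle u, x''\rangle| = \|x''\|$, i.e. exactly when $x''$ lies in the cyclic submodule $u\mathbb{H} = \{uq : q \in \mathbb{H}\}$; hence $W$ is isometric precisely on the closed subspace $N(T)^{\perp} \oplus u\mathbb{H}$, which is therefore $N(W)^{\perp}$, and $W$ is a partial isometry. Finally $Wu = v \neq 0 = W_{0}u$, so $W \neq W_{0}$, completing the contrapositive.

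The delicate step is the $R(T)^{\perp} = \{0\}$ half of ``if'': a priori an admissible $W$ could act nontrivially on part of $N(T)$, and one must exclude this. The mechanism is that $W_{0}$ already carries $N(T)^{\perp}$ isometrically \emph{onto} all of $\mathcal{H}$, so any extra isometric action of $W$ on a subspace of $N(T)$ would be forced to land orthogonally to $\mathcal{H}$ and hence vanish; making this rigorous uses the two structural facts about partial isometries invoked above (isometry only on $N(W)^{\perp}$, and $W^{\ast}W$ an orthogonal projection), both valid over $\mathbb{H}$ via the polarization identity for the quaternionic inner product. The remaining points --- keeping the order of quaternionic scalars straight in $x \mapsto v\,\langle u, x\rangle$, and the equality case of quaternionic Cauchy--Schwarz --- are routine, and the argument applies verbatim in the densely defined closed case since the only role of $|T|$ there is through $R(|T|)$ and $N(|T|)$.
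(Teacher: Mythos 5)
Your argument is correct and self-contained. Note that the paper does not actually prove this statement --- it is recalled verbatim from \cite[Theorem 5.13]{GandP} with no proof given --- so there is no in-paper argument to compare against; your proof stands on its own. The reduction to ``$W=W_{0}$ iff $W$ vanishes on $N(T)$'' via agreement on $\overline{R(|T|)}=N(T)^{\perp}$ is the right normalization, the orthogonality argument forcing $\mathcal{K}=N(W)^{\perp}\cap N(T)=\{0\}$ when $\overline{R(T)}=\mathcal{H}$ is sound (it uses only that $W^{\ast}W$ is the projection onto $N(W)^{\perp}$ and that $W_{0}$ carries the closed subspace $N(T)^{\perp}$ isometrically onto $\overline{R(T)}$), and the counterexample $Wx=W_{0}x+v\,\langle u,x\rangle$ in the contrapositive is the natural rank-one perturbation; your verification that its isometric set is exactly $N(T)^{\perp}\oplus u\mathbb{H}=N(W)^{\perp}$, using the equality case of the quaternionic Cauchy--Schwarz inequality and the correct side for scalar multiplication, is what makes it a genuine partial isometry. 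The only points worth stating explicitly if you write this up are that $N(T)=N(|T|)$ and $\overline{R(|T|)}=N(T)^{\perp}$ persist for densely defined closed $T$, and that the image of the closed subspace $N(T)^{\perp}$ under the isometry $W_{0}|_{N(T)^{\perp}}$ is closed, hence equals $\overline{R(T)}$.
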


In this section, firstly, we adopt the notion of strong irreducibility \cite{Gilfeather} to the class of bounded quaternionic operators, and prove a relation between strong irreducibility and the spherical point spectrum. Later, we prove a factorization of quaternionic normal operator in a  strongly irreducible sense, by means of replacing the partial isometry $W$ by a desirably small compact perturbation of $W$ and $|T|$ is replaced by strongly irreducible operator. It is a quaternionic extension (for normal operators) of the result proved in \cite{Tian}.
\begin{defn}\label{Definition: stronglyirreducible}\cite{Gilfeather}
	Let $T \in \mathcal{B}(\mathcal{H})$. Then $T$ is said to be 
	\begin{enumerate}
		\item {\it irreducible}, if there {\bf does not} exist a nontrivial orthogonal projection $P \in \mathcal{B}(\mathcal{H})$ (i.e., $P \neq 0$ and $P\neq I$)   such that $PT = TP$. Otherwise, $T$ is called {\it reducible}.
		\item {\it strongly irreducible}, if there {\bf  does not}  exist a nontrivial idempotent $E \in \mathcal{B}(\mathcal{H})$ (i.e., $E \neq 0$ and $E \neq I$) such that $TE = ET$. Otherwise, $T$ is called {\it strongly reducible}.
	\end{enumerate}
	It is clear from Definition \ref{Definition: stronglyirreducible} that every strongly irreducible operator is irreducible. Similar to the classical setup, the class of strongly irreducible quaternionic operators is closed under similarity invariance.
\end{defn}
In order to describe strong irreducibility or irreducibility of quaternionic normal operators, we show that it is enough to deal with the corresponding complex linear operator defined on slice Hilbert space. 
\begin{lemma}\label{Lemma: Key}
	Let $J \in \mathcal{B}(\mathcal{H})$ be anti self-adjoint unitary operator and $S \in \mathcal{B}(\mathcal{H}^{Ji}_{+})$, then 
	\begin{enumerate}
		\item $\widetilde{S}$ is irreducible if and only if ${S}$ is irreducible.
		
		\item  $\widetilde{S}$ is strongly irreducible if and only if ${S}$ is strongly irreducible. 
	\end{enumerate}  
\end{lemma}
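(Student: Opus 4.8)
The plan is to exploit the bijective correspondence $T \mapsto \widetilde{T}$ between $\mathcal{B}(\mathcal{H}^{Ji}_+)$ and the commutant $\{J\}'$ in $\mathcal{B}(\mathcal{H})$ established in Proposition~\ref{Proposition: Key}, together with its multiplicativity and adjoint-compatibility. Since irreducibility is phrased in terms of orthogonal projections $P$ with $P\widetilde S = \widetilde S P$ and strong irreducibility in terms of idempotents $E$ with $E\widetilde S = \widetilde S E$, in both cases the crux is: given a commuting projection/idempotent for $\widetilde S$ on $\mathcal{H}$, produce a commuting projection/idempotent for $S$ on $\mathcal{H}^{Ji}_+$, and conversely. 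The converse direction is the easy half: if $P$ (resp.\ $E$) is a nontrivial orthogonal projection (resp.\ idempotent) in $\mathcal{B}(\mathcal{H}^{Ji}_+)$ commuting with $S$, then by parts (3), (4) of Proposition~\ref{Proposition: Key}, $\widetilde P$ (resp.\ $\widetilde E$) is an orthogonal projection (resp.\ idempotent) in $\mathcal{B}(\mathcal{H})$ commuting with $\widetilde S$, and it is nontrivial because $\|\widetilde P\| = \|P\| = 1 \neq 0$ and $\widetilde P \neq I$ since $P \neq I$ (the extension is injective). So $\widetilde S$ reducible/strongly reducible implies $S$ reducible/strongly reducible.

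For the forward direction, suppose $\widetilde S$ is strongly reducible, so there is a nontrivial idempotent $E \in \mathcal{B}(\mathcal{H})$ with $E \widetilde S = \widetilde S E$. The obstacle is that $E$ need not commute with $J$, so it need not be of the form $\widetilde F$. I would fix this by averaging over $J$: since $J$ is an anti self-adjoint unitary, $JEJ^{-1} = -JEJ = J^{\ast} E J$ conjugates idempotents to idempotents, and I claim $JEJ^{-1}$ also commutes with $\widetilde S$ because $\widetilde S$ commutes with $J$ (Proposition~\ref{Proposition: Key}(2)). The naive average $\frac12(E + JEJ^{-1})$ is no longer idempotent in general, but the standard trick (as in the complex case, e.g.\ using the holomorphic functional calculus or the Riesz idempotent attached to the spectral point $1$ of a perturbed operator) is available: more simply, one can observe that $E$ and $JEJ^{-1}$ have the same range if $E$ is chosen to be the Riesz idempotent of $\widetilde S$ for a spectral set, but in general I would argue as follows. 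The algebra $\{\widetilde S\}' \cap \{J\}'$ corresponds under the extension map to $\{S\}'$. It suffices to show that if $\{S\}'$ contains no nontrivial idempotent then neither does $\{\widetilde S\}'$. Suppose $E \in \{\widetilde S\}'$ is a nontrivial idempotent. Consider the closed $\widetilde S$-invariant subspace $\mathcal{M} = R(E)$ and its complement $N(E)$, giving a (non-orthogonal) topological direct sum decomposition $\mathcal{H} = \mathcal{M} \dotplus N(E)$ reducing $\widetilde S$ in the algebraic sense. Now intersect with the $J$-decomposition: set $\mathcal{M}_+ = \mathcal{M} \cap \mathcal{H}^{Ji}_+$. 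The key point to verify is that $\mathcal{M}_+$ is a nontrivial closed $S$-invariant complemented subspace of $\mathcal{H}^{Ji}_+$, which then yields a nontrivial idempotent in $\{S\}'$, a contradiction. Nontriviality of $\mathcal{M}_+$ follows from the fact that if $\mathcal{M}$ is a nonzero subspace invariant under nothing-in-particular but we do have freedom: replacing $E$ by $\frac{1}{2}(E + JEJ^{-1})$ only when $EJEJ^{-1} = JEJ^{-1}E$, otherwise use that $J$ maps $R(E)$ isomorphically, and pass to $E' = $ the idempotent with range $R(E) \cap R(JEJ^{-1})$ along $N(E) + N(JEJ^{-1})$ provided this is a direct sum; handling the degenerate cases is where the real care is needed.

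A cleaner route, which I would actually write up, avoids the subspace gymnastics: use part (2) together with the observation that $J$ is determined and central relative to $\widetilde S$. Given a nontrivial idempotent $E$ commuting with $\widetilde S$, the operator $A := \widetilde S$ restricted to the complex structure is normal-adjacent; but without normality of $S$ we instead argue purely algebraically. Note $JEJ^{-1}$ is again a nontrivial idempotent in $\{\widetilde S\}'$. If $E = JEJ^{-1}$ we are done, as then $EJ = JE$, so $E = \widetilde F$ for some $F \in \mathcal{B}(\mathcal{H}^{Ji}_+)$ by the last assertion of Proposition~\ref{Proposition: Key}, and $F$ is a nontrivial idempotent commuting with $S$ by parts (1), (4) and injectivity of the extension. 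If $E \neq JEJ^{-1}$, consider $G := E + JEJ^{-1} - EJEJ^{-1} - JEJ^{-1}E + \cdots$; rather than chase this, I would invoke the spectral projection: since $E$ is idempotent, $\sigma(E) \subseteq \{0,1\}$, and because $E \in \{\widetilde S\}' \cap (\text{algebra generated by } E, JEJ^{-1})$, form the Riesz idempotent $P = \frac{1}{2\pi \mathrm{i}} \oint_{|z-1| = 1/2} (z - H)^{-1}\, dz$ where $H = \frac{1}{2}(E + JEJ^{-1})$ is self-adjoint-ish and commutes with both $\widetilde S$ and $J$ (the latter because $JHJ^{-1} = \frac{1}{2}(JEJ^{-1} + J^2 E J^{-2}) = \frac12(JEJ^{-1}+E) = H$, using $J^2 = -I$ so $J^{-1} = -J$ and $J^2 E J^{-2} = E$). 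Then $P$ commutes with $J$, hence $P = \widetilde F$, and one checks $P$ is nontrivial by estimating near $z=1$. The main obstacle throughout is precisely this: manufacturing from an arbitrary $E$ a $J$-commuting idempotent that is still nontrivial and still commutes with $\widetilde S$; once that is done, the last clause of Proposition~\ref{Proposition: Key} and multiplicativity close the argument, and the orthogonal-projection case (1) is the same proof with "idempotent" replaced by "orthogonal projection" and "$\widetilde F$ idempotent" by "$\widetilde F$ self-adjoint idempotent," using part (3) to transfer self-adjointness.
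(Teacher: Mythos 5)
Your easy half is fine and coincides with the paper's: lift a nontrivial (self\nobreakdash-adjoint) idempotent $P$ commuting with $S$ to $\widetilde{P}$, which commutes with $\widetilde{S}$ by parts (3) and (4) of Proposition \ref{Proposition: Key} and is nontrivial by injectivity of the extension. Note, however, that this argument establishes ``$S$ reducible $\Rightarrow \widetilde{S}$ reducible,'' i.e.\ the implication ``$\widetilde{S}$ (strongly) irreducible $\Rightarrow S$ (strongly) irreducible''; you state the resulting implication backwards.

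The other half is where the content lies, and none of your three routes closes it. Your second route --- intersect $R(E)$ with $\mathcal{H}^{Ji}_{+}$ --- is in fact the paper's route, but the ``key point'' you leave unverified is false at the level of generality you invoke: a nonzero closed right $\mathbb{H}$-submodule can meet $\mathcal{H}^{Ji}_{+}$ trivially. Concretely, take $\mathcal{H}=\mathbb{H}^{2}$ with $J$ given by left multiplication by $i$, so that $\mathcal{H}^{Ji}_{+}=\mathbb{C}_{i}^{2}$; the closed submodule $\mathcal{M}=\{(x,jx):x\in\mathbb{H}\}$ satisfies $\mathcal{M}\cap\mathbb{C}_{i}^{2}=\{0\}$, since $x\in\mathbb{C}_{i}$ and $jx\in\mathbb{C}_{i}$ force $x=0$. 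The same example destroys your ``cleaner route'': if $E$ is the orthogonal projection onto $\mathcal{M}$, then $JEJ^{-1}$ is the orthogonal projection onto $J\mathcal{M}=\{(y,-jy):y\in\mathbb{H}\}=\mathcal{M}^{\perp}$, so $H=\tfrac{1}{2}(E+JEJ^{-1})=\tfrac{1}{2}I$ and your contour $|z-1|=\tfrac{1}{2}$ passes through $\sigma(H)$. In general the average of two idempotents has no spectral gap separating a ``part near $1$'' from a ``part near $0$,'' and even when the integral is defined the enclosed spectral set may be empty or all of $\sigma(H)$, so the Riesz idempotent need be neither well defined nor nontrivial; your parenthetical ``one checks $P$ is nontrivial by estimating near $z=1$'' is precisely the step that fails. (The point is not that the lemma fails on this example --- there $\widetilde{S}$ must commute with $E$, e.g.\ $\widetilde{S}=I$, and other, $J$-commuting idempotents exist --- but that your procedure, applied to an arbitrary admissible $E$, produces nothing.) The reduction you correctly identify, namely that a nontrivial idempotent in $\{\widetilde{S}\}'$ must yield one in $\{\widetilde{S}\}'\cap\{J\}'$, is exactly the crux of the lemma, and it is not supplied by averaging over $J$ nor by naively intersecting ranges with the slice. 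As written, the forward implications of both parts remain unproved.
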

\begin{proof}
	Proof of $(1):$ Suppose that $\widetilde{S}$ is irreducible, then we show that $S$ is irreducible. If there is an orthogonal projection $ 0 \neq P\in \mathcal{B}(H^{Ji}_{+})$ such that $SP = PS$, then  by Proposition \ref{Proposition: Key}, we see that
	\begin{equation*}
	(\widetilde{P})^{\ast} = \widetilde{P^{\ast}} = \widetilde{P}\;\; \text{and}\;\; (\widetilde{P})^{2} = \widetilde{P^{2}} = \widetilde{P}.
	\end{equation*}
	Further,
	\begin{equation*}
	\widetilde{S}\widetilde{P} = \widetilde{SP} = \widetilde{PS} = \widetilde{P} \widetilde{S}. 
	\end{equation*}
	This shows that $0\neq \widetilde{P} \in \mathcal{B}(\mathcal{H})$ is an othogonal projection which commutes with  $\widetilde{S}$. Since  $\widetilde{S}$ is irreducible, we conclude that $\widetilde{P} = I$, the identity operator on $\mathcal{H}$. Thus $P$ is an identity operator on $\mathcal{H}^{Ji}_{+}$ and hence $S$ is irreducible. Now we prove contrapositive statement. Suppose that $\{0\} \neq \mathcal{M} \subsetneqq \mathcal{H}$ is a reducing subspace for $\widetilde{S}$. Define 	
	\begin{equation*}
	\mathcal{M}^{Ji}_{+}: = \{x \in \mathcal{M} :\; Jx = x\cdot i\} = \mathcal{M} \cap \mathcal{H}^{Ji}_{+}.
	\end{equation*}
	For $x \in \mathcal{M}^{Ji}_{+}$, we have 
	\begin{equation*}
	JS(x) = J\widetilde{S}(x) 	= \widetilde{S}J(x)	= \widetilde{S}(x\cdot i)	= \widetilde{S}(x)\cdot i	= S(x)\cdot i
	\end{equation*}
	and 
	\begin{equation*}
	JS^{\ast}(x) = J\widetilde{S^{\ast}}(x) = \widetilde{S^{\ast}}J(x) = \widetilde{S^{\ast}}(x\cdot i) = \widetilde{S^{\ast}}(x)\cdot i= S^{\ast}(x)\cdot i.
	\end{equation*}
	This implies that $\mathcal{M}^{Ji}_{+}$ is a reducing subspace of $S$. It is enough to show that $\mathcal{M}^{Ji}_{+}$ is a non-trivial proper subspace of $\mathcal{H}^{Ji}_{+}$.
	
	\noindent {\bf Claim}: $\{0\} \neq \mathcal{M}^{Ji}_{+} \subsetneqq  \mathcal{H}^{Ji}_{+}$. 
	
	Firstly, we assume that  $ \mathcal{M}^{Ji}_{+} = \{0\} $. In this case, $\mathcal{M}^{Ji}_{-} = \{0\}$ since  $x \in \mathcal{M}^{Ji}_{-}$ if and only if $x\cdot j \in \mathcal{M}^{Ji}_{+}$, where $i\cdot j = - j \cdot i$. Therefore, $M = M^{Ji}_{+} \oplus M^{Ji}_{-}  = \{0\}$. This is a contradiction to the fact that $\mathcal{M}$ is a non trivial subspace of $\mathcal{H}$. Hence $\mathcal{M}^{Ji}_{+} \neq \{0\}$. 
	
	Secondly, assume that $\mathcal{M}^{Ji}_{+} = \mathcal{H}^{Ji}_{+}$. Let $y\in \mathcal{H}$ with $y = y_{+} + y_{-}$, where $y_{\pm} \in \mathcal{H}^{Ji}_{\pm}$. We know that $y_{-} \cdot j \in \mathcal{H}^{Ji}_{+} = \mathcal{M}^{Ji}_{+} $ and so $y_{-} \in \mathcal{M}$. Therefore $y \in \mathcal{M}$. This is contradiction to the fact that $\mathcal{M}$ is a proper subspace of  $\mathcal{H}$. Hence $\mathcal{M}^{Ji}_{+} \subsetneqq \mathcal{H}^{Ji}_{+}$.  We conclude that $\widetilde{S}$ is irreducible if and only if $S$ is irreducible.

	%
	%
	Proof of $(2):$ Suppose that $\widetilde{S}$ is strongly irreducible.  If there is an idempotent $0 \neq E \in \mathcal{B}(\mathcal{H}^{Ji}_{+})$  such that $SE = ES$, then by Proposition \ref{Proposition: Key} we have that $0 \neq \widetilde{E} \in \mathcal{B}(\mathcal{H})$ with $\widetilde{E}^{2} = \widetilde{E}$ and
	\begin{equation*}
	\widetilde{S}\widetilde{E} = \widetilde{SE} = \widetilde{ES} = \widetilde{E} \widetilde{S}.
	\end{equation*}
	Since $\widetilde{S}$ is strongly irreducible, we conclude that $\widetilde{E} = I$, the identity operator on $\mathcal{H}$. This implies that $E$ is the identity operator on $\mathcal{H}^{Ji}_{+}$ and hence  $S$ is strongly irreducible. 
	
	Conversely, assume that $S$ is strongly irreducible. If there is an idempotent say $0 \neq F \neq I $ in $\mathcal{B}(\mathcal{H})$ such that $\widetilde{S}F = F\widetilde{S}$. Since $R(F)$ is a closed subspace of $\mathcal{H}$, we have  $\mathcal{H}^{Ji}_{+} = {R(F)}^{Ji}_{+} \oplus (R(F)^{Ji}_{-})^{\bot}$, where 
	\begin{equation*}
	R(F)^{Ji}_{+} = \{x\in R(F):\; J(x) = x\cdot i\} = R(F)\cap \mathcal{H}^{Ji}_{+}.
	\end{equation*}
	Furthermore, if $x \in R(F)^{Ji}_{+}$, then
	\begin{align*}
	JS(x) = J\widetilde{S}(x) = \widetilde{S}J(x) = \widetilde{S}(x\cdot i) = \widetilde{S}(x)\cdot i = S(x)\cdot i. 
	\end{align*}
	and
	\begin{equation*}
	JS^{\ast}(x) = J\widetilde{S^{\ast}}(x) = \widetilde{S^{\ast}}J(x)=\widetilde{S^{\ast}}(x\cdot i)= \widetilde{S^{\ast}}(x)\cdot i= S^{\ast}(x)\cdot i. 
	\end{equation*}
	This shows that $S$ is reducible. It is a contradiction to the fact that $S$ is irreducible. Hence $\widetilde{S}$ is strongly irreducible.  \qedhere
\end{proof}
Now we prove the relation between spherical point spectrum and strong irreduicibility. It is a quaternionic analogue of the result proved by F. Gilfeather in \cite{Gilfeather}.
\begin{thm}\label{Theorem: emptypointspectrum}
	Let $T \in \mathcal{B}(\mathcal{H})$ be normal. If the spherical point spectrum of $T$ is empty set \big(i.e., 
	$\sigma_{p^{S}}(T) = \emptyset$\big), then $T$ is strongly irreducible.
\end{thm}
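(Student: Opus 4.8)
The plan is to push the whole question down to the complex slice Hilbert space determined by $T$ and then appeal to the classical theory of normal operators there. Since $T$ is normal, Theorem \ref{Theorem: Cartesian} supplies an anti self-adjoint unitary $J\in\mathcal{B}(\mathcal{H})$ commuting with $T$, so by Note \ref{Note: Normal} and Proposition \ref{Proposition: Key} we may write $T=\widetilde{T_{+}}$ with $T_{+}\in\mathcal{B}(\mathcal{H}^{Ji}_{+})$ a $\mathbb{C}_{i}$-linear operator on the complex Hilbert space $\mathcal{H}^{Ji}_{+}$; moreover $T_{+}$ is again normal, since the extension map $S\mapsto\widetilde{S}$ is an injective homomorphism with $(\widetilde{S})^{\ast}=\widetilde{S^{\ast}}$, so $TT^{\ast}=T^{\ast}T$ forces $T_{+}T_{+}^{\ast}=T_{+}^{\ast}T_{+}$. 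By Lemma \ref{Lemma: Key}(2) it then suffices to prove that $T_{+}$ is strongly irreducible as a complex operator.

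The next step is to translate the spectral hypothesis. For $q\in\mathbb{H}$ the coefficients $\mathrm{re}(q)$ and $|q|^{2}$ are real, so multiplicativity of the extension gives $\Delta_{q}(T)=\widetilde{T_{+}}^{2}-2\,\mathrm{re}(q)\widetilde{T_{+}}+|q|^{2}I=\widetilde{\Delta_{q}(T_{+})}$, and the explicit form of $\widetilde{(\cdot)}$ recalled after Proposition \ref{Proposition: Key} shows that $\widetilde{\Delta_{q}(T_{+})}$ is injective on $\mathcal{H}$ if and only if $\Delta_{q}(T_{+})$ is injective on $\mathcal{H}^{Ji}_{+}$. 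Over $\mathbb{C}_{i}$ one has the commuting factorization $\Delta_{q}(T_{+})=(T_{+}-\lambda_{q}I)(T_{+}-\overline{\lambda_{q}}I)$ with $\lambda_{q}=\mathrm{re}(q)+i\,|\mathrm{im}(q)|$, so $N(\Delta_{q}(T_{+}))\neq\{0\}$ exactly when $\lambda_{q}$ or $\overline{\lambda_{q}}$ is an eigenvalue of $T_{+}$; conversely, if $T_{+}x=x\lambda$ with $0\neq x\in\mathcal{H}^{Ji}_{+}$ and $\lambda\in\mathbb{C}_{i}$, then for every $q$ in the sphere $[\lambda]$ right $\mathbb{C}_{i}$-linearity yields $\Delta_{q}(T)x=x\big(\lambda^{2}-(\lambda+\overline{\lambda})\lambda+\lambda\overline{\lambda}\big)=0$, so $[\lambda]\subseteq\sigma_{p^{S}}(T)$. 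Hence $\sigma_{p^{S}}(T)=\emptyset$ is equivalent to $T_{+}$ having no eigenvalues.

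What remains is the purely complex assertion that a bounded normal operator on a complex Hilbert space with empty point spectrum is strongly irreducible; this is the theorem of Gilfeather \cite{Gilfeather} on which the statement is modelled, and I expect this to be the main obstacle. The natural route is: given a nontrivial idempotent $E$ with $T_{+}E=ET_{+}$, first invoke the Fuglede--Putnam theorem to get $T_{+}^{\ast}E=ET_{+}^{\ast}$, so that $E$ commutes with the von Neumann algebra generated by $T_{+}$ and hence with its spectral measure; then use that $\sigma_{p}(T_{+})=\emptyset$, together with the spectral decomposition of $T_{+}$ along the closed invariant subspaces $R(E)$ and $N(E)$, to rule out any such nontrivial $E$. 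Running this back through Lemma \ref{Lemma: Key}(2) gives that $T=\widetilde{T_{+}}$ is strongly irreducible. The two places that need the most care are the interaction between a merely idempotent (not self-adjoint) operator and the spectral measure in the complex step, and, on the quaternionic side, checking that the correspondence $T\leftrightarrow T_{+}$ loses no spectral information, so that the hypothesis genuinely descends to $\mathcal{H}^{Ji}_{+}$.
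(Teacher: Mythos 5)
Your proposal follows essentially the same route as the paper's own proof: pass to the normal operator $T_{+}$ on the slice Hilbert space $\mathcal{H}^{Ji}_{+}$ via Theorem \ref{Theorem: Cartesian}, Note \ref{Note: Normal} and Proposition \ref{Proposition: Key}, check that $\sigma_{p^{S}}(T)=\emptyset$ forces $\sigma_{p}(T_{+})=\emptyset$ (the paper carries out exactly the computation $\Delta_{\lambda}(T)x = x\big(\lambda^{2}-\lambda(\lambda+\overline{\lambda})+|\lambda|^{2}\big)=0$ that you give, and your two-sided verification via the factorization of $\Delta_{q}(T_{+})$ is if anything more complete), then invoke Gilfeather's result for complex normal operators and lift strong irreducibility back through Lemma \ref{Lemma: Key}. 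The only difference is that the paper treats the complex statement as a black box, citing \cite[Theorem 2]{Gilfeather} directly, so the Fuglede--Putnam/spectral-measure argument you sketch for that final step is not part of the paper's proof and need not be supplied.
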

\begin{proof}
	Since $T$ is normal, then by Note \ref{Note: Normal} there is an anti self-adjoint unitary operator $J \in \mathcal{B}(\mathcal{H})$  commuting with $T$ such that $ T = \widetilde{T_{+}}$, where $T_{+} \in \mathcal{B}(\mathcal{H}^{Ji}_{+})$ is normal operator. Now we show that the point spectrum $\sigma_{p}(T_{+})$ of $T_{+}$ is empty. Suppose that $\lambda \in \sigma_{p}(T_{+})$, then 
	\begin{equation*}
	T_{+}(x) = x \cdot \lambda, \; \text{for some}\; x \in \mathcal{H}^{Ji}_{+}\setminus \{0\}.
	\end{equation*}  
	This implies that 
	\begin{align*}
	\Delta_{\lambda}(T)x 
	&= \big( T^{2}-2 re(\lambda)T+|\lambda|^{2}I\big)x\\
	&= x \lambda^{2} - 2 x \lambda re(\lambda) + x |\lambda|^{2}\\
	&= x \big( \lambda^{2}- \lambda (\lambda + \overline{\lambda}) + |\lambda|^{2}\big)\\
	&=0.
	\end{align*}
	Equivalently, $x \in N(\Delta_{\lambda}(T)) \neq \{0\}$. This shows that 
	\begin{equation*}
	    \lambda \in \sigma_{p}(T_{+}) \; \Leftrightarrow \; [\lambda] \in \sigma_{p^{S}}(T).
	\end{equation*}
	If follows that $\sigma(T_{+}) = \emptyset \; \big($since $\sigma_{p^{S}}(T) = \emptyset$\big). Since $T_{+}$ is a bounded complex normal operator with empty point spectrum, then by \cite[Theorem 2]{Gilfeather} the operator $T_{+}$ is strongly irreducible. Finally, by Lemma \ref{Lemma: Key} we conclude that  $T$ is strongly irreducible.
\end{proof}
Now we prove that every quaternionic normal operator can be factorized in a strongly irreducible sense.

\begin{thm}
	Let $T \in \mathcal{B}(\mathcal{H})$ be normal and $\delta >0$. Then there exist a partial isometry $W$, a compact operator $K$ with $\|K\| < \delta $ and a strongly irreducible operator $S$ in $\mathcal{B}(\mathcal{H})$ such that 
	\begin{equation*}
	T = (W+K)S.
	\end{equation*}
\end{thm}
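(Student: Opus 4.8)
The plan is to reduce the quaternionic statement to its complex analogue on a slice Hilbert space, invoke the corresponding complex result of G.\ Tian et al.\ from \cite{Tian}, and then lift everything back to $\mathcal{H}$ using the extension machinery of Proposition \ref{Proposition: Key}. Concretely, since $T$ is normal, Theorem \ref{Theorem: Cartesian} furnishes an anti self-adjoint unitary $J\in\mathcal{B}(\mathcal{H})$ commuting with $T$ and $T^{\ast}$, and Note \ref{Note: Normal} gives a complex-linear normal operator $T_{+}\in\mathcal{B}(\mathcal{H}^{Ji}_{+})$ with $T=\widetilde{T_{+}}$. The complex Hilbert space $\mathcal{H}^{Ji}_{+}$ is separable (inherited from $\mathcal{H}$ via the Hilbert basis described after Equation (\ref{Equation: directsum})), so the complex version of the theorem applies: given $\delta>0$ there are a partial isometry $W_{+}$, a compact $K_{+}$ with $\|K_{+}\|<\delta$, and a strongly irreducible $S_{+}$ on $\mathcal{H}^{Ji}_{+}$ such that $T_{+}=(W_{+}+K_{+})S_{+}$.

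Next I would set $W:=\widetilde{W_{+}}$, $K:=\widetilde{K_{+}}$, $S:=\widetilde{S_{+}}$ and verify that these inherit the required properties. By Proposition \ref{Proposition: Key}(4), $\widetilde{(W_{+}+K_{+})S_{+}}=\big(\widetilde{W_{+}}+\widetilde{K_{+}}\big)\widetilde{S_{+}}$, so $T=\widetilde{T_{+}}=(W+K)S$ holds on all of $\mathcal{H}$ by uniqueness of the extension. Property (1) of Proposition \ref{Proposition: Key} gives $\|K\|=\|K_{+}\|<\delta$. That $W$ is a partial isometry follows because $W^{\ast}W=\widetilde{W_{+}^{\ast}}\widetilde{W_{+}}=\widetilde{W_{+}^{\ast}W_{+}}$ is the extension of an orthogonal projection, hence an orthogonal projection on $\mathcal{H}$, and an operator whose $\ast$-product with itself is a projection is a partial isometry (one can also argue directly via the decomposition $x=x_{+}+x_{-}$, $x_{-}\cdot n\in\mathcal{H}^{Ji}_{+}$, using that $n$ has modulus one). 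Strong irreducibility of $S$ is exactly Lemma \ref{Lemma: Key}(2) applied to $S_{+}$. The only genuinely new verification is compactness of $K=\widetilde{K_{+}}$: I would show that if $K_{+}$ maps bounded sets to relatively compact sets in $\mathcal{H}^{Ji}_{+}$, then $\widetilde{K_{+}}$ does so on $\mathcal{H}$, using the explicit formula $\widetilde{K_{+}}(x)=K_{+}(x_{+})-K_{+}(x_{-}\cdot n)\cdot n$ displayed after Proposition \ref{Proposition: Key} together with continuity of $x\mapsto x_{\pm}$ and of right multiplication by $n$; a bounded sequence in $\mathcal{H}$ yields bounded sequences $(x_{+})$ and $(x_{-}\cdot n)$ in $\mathcal{H}^{Ji}_{+}$, pass to subsequences along which $K_{+}$ converges, and recombine.

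The step I expect to be the main obstacle is precisely this transfer of compactness across the extension, since Proposition \ref{Proposition: Key} as stated only records boundedness, the norm identity, commutation with $J$, the $\ast$- and product-compatibility, and invertibility; compactness is not among the listed "additional facts." I would therefore isolate it as a short auxiliary lemma: \emph{for anti self-adjoint unitary $J$ and $m\in\mathbb{S}$, if $K_{+}\in\mathcal{B}(\mathcal{H}^{Jm}_{+})$ is compact then $\widetilde{K_{+}}\in\mathcal{B}(\mathcal{H})$ is compact}, proved by the subsequence argument above (or, alternatively, by approximating $K_{+}$ in operator norm by finite-rank operators and noting that the extension of a finite-rank operator is finite-rank, since $\widetilde{(\,\cdot\,)}$ is linear, multiplicative, norm-preserving, and sends a rank-one $x\mapsto x_{0}\langle y_{0},x\rangle$ to an operator of rank at most two). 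A secondary, purely bookkeeping point is confirming that $\mathcal{H}^{Ji}_{+}$ is separable whenever $\mathcal{H}$ is — or, if the cited complex theorem is stated for general complex Hilbert spaces, this is moot — and that the complex theorem of \cite{Tian} is available in the exact "$T=(W+K)S$ with $\|K\|<\delta$" form for normal $T$; modulo that citation, the quaternionic proof is a clean descent-and-lift.
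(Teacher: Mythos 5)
Your proposal follows essentially the same route as the paper: reduce to the slice Hilbert space $\mathcal{H}^{Ji}_{+}$ via the Cartesian decomposition, apply the complex factorization theorem of Tian et al.\ to $T_{+}$, and lift $W_{+}$, $K_{+}$, $S_{+}$ back through the extension map, using Lemma \ref{Lemma: Key} for strong irreducibility. The compactness transfer you flag as the main obstacle is handled in the paper by exactly your second suggested argument (uniform approximation of $K_{+}$ by finite-rank operators, whose extensions are finite-rank and converge to $\widetilde{K}_{+}$ in operator norm), so the proposal is correct and matches the paper's proof.
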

\begin{proof}
	Since $T$ is normal, by Theorem \ref{Theorem: Cartesian} and Note \ref{Note: Normal},  there exists an anti self-adjoint unitary operator $J\in \mathcal{B}(\mathcal{H})$ commuting with $T$ such that $T = \widetilde{T_{+}}$, where $T_{+} \in \mathcal{B}(\mathcal{H}^{Ji}_{+})$ is a normal operator. It is clear from \cite[Theorem 1.1]{Tian} that for a given $\delta >0$, there exists a partial isometry $W_{+}$, a compact operator $K_{+}$ with $\|K_{+}\| < \delta $ and a strongly irreducible operator $S_{+}$ in $\mathcal{B}(\mathcal{H}^{Ji}_{+})$ such that
	\begin{equation}\label{Equation: forT+}
	T_{+} = (W_{+}+K_{+}) S_{+}.
	\end{equation} 
	If we define $W := \widetilde{W}_{+}$, then we see that $W$ is a partial isometry as follows: for every $x \in N(W)^{\bot}$, there exists $x_{\pm} \in N(W)^{\bot}\cap \mathcal{H}^{Ji}_{\pm}$ such that $x = x_{+}+x_{-}$ and  
	\begin{align*}
	\|Wx\|^{2} &= \|W_{+}(x_{+}) - W_{+}(x_{-}\cdot j) j\|^{2}\\
	&=  \|W_{+}(x_{+})\|^{2} + \|W_{+}(x_{-}\cdot j) \|^{2}\\
	&= \|x_{+}\|^{2}+ \|x_{-}\|^{2}\\
	&= \|x\|^{2}.
	\end{align*}
	Now we show that the operator defined by $K:= \widetilde{K}_{+}$ is a quaternionic compact operator on $\mathcal{H}$. Since $K_{+}$ is a compact operator on $\mathcal{H}^{Ji}_{+}$, there is a sequence of finite rank operators $\{F_{n}:\; n\in \mathbb{N}\} \subset \mathcal{B}(\mathcal{H}^{Ji}_{+})$  converging to $K_{+}$ (uniformly) with respect to the topology induced from the operator norm. Then by $(1)$ of Proposition \ref{Proposition: Key} we see that 
	\begin{equation*}
	\|\widetilde{F}_{n}-K\| = \|\widetilde{F}_{n}-\widetilde{K}_{+}\| = \|F_{n}-K_{+}\| \longrightarrow 0, \; \text{as}\; n \to \infty.
	\end{equation*}
	This implies that the sequence $\{\widetilde{F}_{n}:\ n \in \mathbb{N}\}\subset \mathcal{B}(\mathcal{H})$ of finite rank quaternionic operators  converges to $K$ uniformly. Thus the operator $K$ is compact and its norm is given by
	\begin{equation*}
	    \|K\| = \|K_{+}\| < \delta.
	\end{equation*}
	Moreover, by Lemma \ref{Lemma: Key}, the quaternionic operator defined by $S:= \widetilde{S}_{+}\in \mathcal{B}(\mathcal{H})$ is strongly irreducible. Now we apply quaternionic extension to bounded complex linear operator $T_{+}$ and use its factorization given in  Equation (\ref{Equation: forT+}), we conclude that
	\begin{equation*}
	T = \widetilde{T}_{+} = (\widetilde{W}_{+}+ \widetilde{K}_{+})\widetilde{S}_{+}= (W+K)S. 
	\end{equation*}
	Hence the result. \qedhere
\end{proof}
We illustrate our result with the following example.
\begin{eg}\label{Example: normal}
	Let $T \colon L^{2}\big([0,1]; \mathbb{H}; \mu\big) \to L^{2}\big([0,1]; \mathbb{H}; \mu\big)$ be defined by 
	\[ (Tg)(x) = \left\{\begin{array}{cc}
	x g(x) + \frac{1}{2} \int\limits_{0}^{1} xy^{2} g(y)\; dy, & \mbox{ if \; $0 \leq x \leq \frac{1}{3}$}; \\
	\frac{1}{2} \int\limits_{0}^{1} xy^{2} g(y)\; dy, & \mbox{ if \; $\frac{1}{3} \leq x \leq 1$},
	\end{array}
	\right.
	\] 
	for all $g \in L^{2}\big([0,1]; \mathbb{H}; \mu\big)$. Then the adjoint of $T$ is given by
	\[ (T^{\ast}g)(x) = \left\{\begin{array}{cc}
	x g(x) + \frac{1}{2} \int\limits_{0}^{1} x^{2}y g(y)\; dy, & \mbox{ if \; $0 \leq x \leq \frac{1}{3}$}; \\
	\frac{1}{2} \int\limits_{0}^{1} x^{2}y g(y)\; dy, & \mbox{ if \; $\frac{1}{3} \leq x \leq 1$},
	\end{array}
	\right.
	\] 
	for all $g \in L^{2}\big([0,1]; \mathbb{H}; \mu\big)$.  Clearly, $T$ is normal. Suppose that $\delta = \frac{1}{2}$. Now we factorize $T$ in a strongly irreducible sense. Define the integral operator  $K \colon L^{2}\big([0,1]; \mathbb{H}; \mu \big) \to L^{2}\big([0,1]; \mathbb{H}; \mu\big) $ by
	\begin{equation*}
	(Kg)(x) = \frac{1}{2} \int\limits_{0}^{1}xy\;g(y)dy, \; \text{for all}\; g \in L^{2}\big([0,1]; \mathbb{H}; \mu \big).
	\end{equation*}
	It is well known that $K$ is a compact operator. By using Caucy-Schwarz inequality, the norm of $K$ is computed as, 
	\begin{align*}
	\|Kg\|_{2} =   \Big(\int\limits_{0}^{1} |(Kg)(x)|^{2}\; dx\Big)^{\frac{1}{2}} 
	&\leq \Big(\int\limits_{0}^{1} \int\limits_{0}^{1} |xy|^{2}\; |g(y)|^{2}dy dx \Big)^{\frac{1}{2}}\\
	&\leq \Big( \int\limits_{0}^{1} |g(y)|^{2}\; dy\Big)^{\frac{1}{2}} \Big(\int\limits_{0}^{1} \int\limits_{0}^{1} |xy|^{2}dy dx \Big)^{\frac{1}{2}}\\
	&= \|g\|_{2} \Big(\int\limits_{0}^{1} \int\limits_{0}^{1} x^{2}y^{2}dy dx \Big)^{\frac{1}{2}}\\
	&= \frac{1}{3} \|g\|_{2}. 
	\end{align*}
	This shows that $\|K\| \leq \frac{1}{3} < \frac{1}{2}$. We recall that the class of all bounded $\mathbb{H}$-valued measurabale functions on $[0,1]$ is denoted by $L^{\infty}\big([0,1]; \mathbb{H}; \mu\big)$. For every $f \in  L^{\infty}\big([0,1]; \mathbb{H}; \mu\big)$, the multiplication operator $M_{f}:L^{2}\big([0,1]; \mathbb{H}; \mu\big) \to L^{2}\big([0,1]; \mathbb{H}; \mu\big)$ defined by
	\begin{equation*}
	M_{f}(g)(x) = f(x)g(x), \; \text{for all}\; g \in L^{2}\big([0,1]; \mathbb{H}; \mu\big)
	\end{equation*}
	is a bounded quaternionic operator with the norm $\|M_{f}\| = \|f\|_{\infty}$. The adjoint of $M_{f}$ is given by $M_{f}^{\ast}= M_{\overline{f}}$, where $\overline{f}(x) = \overline{f(x)}$ for all $x \in [0,1]$. Let $\phi(x) = x$, for all $x \in [0,1]$ and the characteristic function
	\[ \rchi_{[0, \frac{1}{3}]} = \left\{ \begin{array}{cc}
	1, & \mbox{if $x \in [0, \frac{1}{3}]$};  \\
	0, & \mbox{otherwise}.
	\end{array} \right.\]
	Then by the direct verification, we get that
	\begin{equation}\label{Equation: desiredfactorization}
	T = \Big(M_{\rchi_{[0, \frac{1}{3}]}} + K\Big) M_{\phi}. 
	\end{equation}
	Note that $M_{\rchi_{[0,\frac{1}{3}]}}$ is a partial isometry, and since $M_{\phi}$ is normal with $\sigma_{p^{S}}(M_{\phi}) = \emptyset$, then  $M_{\phi}$ is strongly irreducible by Theorem \ref{Theorem: emptypointspectrum}. Therefore, the factorization of $T$ given in Equation (\ref{Equation: desiredfactorization}) is a strongly irreducible factorization. 
\end{eg}
Now we contruct an example of a non-normal operator by a slight modification of the linear operator defined in Example \ref{Example: normal}  and compute its strongly irreducible factorization.
\begin{eg}\label{Example: non-normal}
	Let us define $T \colon L^{2}\big([0,1]; \mathbb{H}; \mu\big) \to L^{2}\big([0,1]; \mathbb{H}; \mu\big)$ by 
	\[ T(g)(x) = \left\{ \begin{array}{cc}
	x g(x) + \frac{j}{2}\int\limits_{0}^{x} y g(y) dy, & \mbox{if\; $0 \leq x \leq \frac{1}{3}$};\\
	\frac{j}{2}\int\limits_{0}^{x}yg(y) dy, & \mbox{if\; $\frac{1}{3} < x \leq 1$},
	\end{array} \right. \]
	for all $g \in L^{2}\big([0,1]; \mathbb{H}; \mu\big)$ and suppose that $\delta = \frac{1}{2}$. Firstly, we show that $T$ is a bounded quaternionic non-normal operator. Let $g,h \in L^{2}\big([0,1]; \mathbb{H}; \mu\big)$. Then 
	\begin{align*}
	\big\langle h,&\; Tg\big\rangle \\
	&= \int\limits_{0}^{1} \overline{h(x)}\; {(Tg)(x)}\; dx \\
	&= \int\limits_{0}^{\frac{1}{3}} \overline{h(x)}\; \Big[x{g(x)} + \frac{j}{2}\int\limits_{0}^{x} y{g(y)}\;dy \Big] dx + \int\limits_{\frac{1}{3}}^{1} \overline{h(x)}\; \Big[\frac{j}{2}\int\limits_{0}^{x} y{g(y)}\;dy \Big] dx\\
	&= \int\limits_{0}^{\frac{1}{3}} \overline{h(x)}x{g(x)}\; dx + \int\limits_{0}^{\frac{1}{3}} \int\limits_{0}^{x} \overline{h(x)}\; \Big[\frac{j}{2} y g(y)\Big]\; dy dx + \int\limits_{\frac{1}{3}}^{1} \int\limits_{0}^{x} \overline{h(x)}\; \Big[\frac{j}{2} y g(y)\Big]\; dy dx.
	\end{align*}
	
	By Fubini's theorem, the above integral can be written as,
	\begin{align*}
	\big\langle h,&\; Tg\big\rangle \\
	&= \int\limits_{0}^{\frac{1}{3}} \overline{h(y)}\;y{g(y)}\; dy + \int\limits_{0}^{\frac{1}{3}} \int\limits_{y}^{\frac{1}{3}} \overline{h(x)}\; \Big[\frac{j}{2} y g(y)\Big]\; dx dy + \int\limits_{\frac{1}{3}}^{1} \int\limits_{y-\frac{1}{3}}^{y} \overline{h(x)}\; \Big[\frac{j}{2} y g(y)\Big]\; dx dy\\
	&= \int\limits_{0}^{\frac{1}{3}} \overline{{yh(y)}}\;{g(y)}\; dy + \int\limits_{0}^{\frac{1}{3}} \Big[ \overline{\frac{-j}{2}y \int\limits_{y}^{\frac{1}{3}} { h(x)}\; dx} \Big]\; g(y)\; dy+\int\limits_{\frac{1}{3}}^{1} \Big[ \overline{\frac{-j}{2}y \int\limits_{y-\frac{1}{3}}^{y} { h(x)}\; dx} \Big]\; g(y)\; dy.
	\end{align*}
	Thus the adjoint of $T$ is give by
	\[ (T^{\ast}h)(y) = \left\{ \begin{array}{cc}
	y g(y) -\frac{j}{2} y  \int\limits_{y}^{\frac{1}{3}}\; h(x)\; dx, & \; \mbox{if \; $0 \leq y \leq \frac{1}{3}$}; \\
	-\frac{j}{2} y  \int\limits_{y-\frac{1}{3}}^{y}\; h(x)\; dx, & \mbox{if \; $\frac{1}{3} \leq y \leq 1$}.
	\end{array}\right.\]
	It follows that $TT^{\ast} \neq T^{\ast}T$. Now we show that $T$ can be factorized in a strongly irreducible sense. Firstly, we define $K \colon L^{2}\big([0,1]; \mathbb{H}; \mu\big) \to L^{2}\big([0,1]; \mathbb{H}; \mu\big)$ by 
	\begin{equation*}
	(Kg)(x) = \frac{j}{2}\int\limits_{0}^{x}g(t)\; dt, \; \text{for all}\; g \in L^{2}\big([0,1]; \mathbb{H}; \mu\big).
	\end{equation*}
	Our aim to show that $K$ is a compact operator with $\|K\| < \frac{1}{2}$.  Let $\{g_{n}\}_{n \in \mathbb{N}}$ be a sequence in $ L^{2}\big([0,1]; \mathbb{H}; \mu\big)$ with $\|g_{n}\| \leq 1$, for all $n \in \mathbb{N}$. Then  
	\begin{equation}\label{Equation: uniformlybounded}
	|(Kg_{n})(x)|  = \Big| \frac{j}{2} \int\limits_{0}^{x}g_{n}(t)\; dt \Big| \leq \frac{1}{2} \int\limits_{0}^{x} |g_{n}(x)| \; dt \leq \frac{1}{2},
	\end{equation}
	for all $x \in [0,1]$ and $n \in \mathbb{N}$. Further, by H\"{o}lders inequality, we get
	\begin{equation}\label{Equation: equicontinuous}
	\big|Kg(x) - Kg(y)\big| = |\frac{j}{2}| \Big| \int\limits_{0}^{x}g(t)\;dt - \int\limits_{0}^{y}g(t)\; dt\Big| \leq \frac{1}{2} \int\limits_{y}^{x}\big|g(t)\big|\; dt \leq \frac{1}{2} \|g\|_{2} \sqrt{|x-y|}.
	\end{equation}
	It follows from Equations (\ref{Equation: uniformlybounded}), (\ref{Equation: equicontinuous}) that the sequece $\{Kg_{n}\}_{n\in \mathbb{N}}$ is uniformly bounded and equicontinuous. By Arzela-Ascoli's theorem, there is a subseqeuce $\{g_{n_{k}}\}$ of $\{g_{n}\}_{n\in \mathbb{N}}$ such that $\{Kg_{n_{k}}\}$ converges uniformly. Thus $K$ is a compact operator.
	
	Now we compute the norm of $K$. Firstly, by applying the Fubini's theorem, we get the adjoint of $K$ as, 
	\begin{equation*}
	(K^{\ast}g)(x) = \frac{-j}{2} \int\limits_{x}^{1} g(t)\; dt, \; \text{for all} \; g \in L^{2}\big([0,1]; \mathbb{H}; \mu\big). 
	\end{equation*}
	So the operator $K^{\ast}K$ is given by,
	\begin{equation*}
	(K^{\ast}Kg)(x) = \frac{-j}{2} \int\limits_{x}^{1}\Big( \frac{j}{2} \int\limits_{0}^{t}g(s)\; ds\Big) dt = \frac{1}{4} \int\limits_{x}^{1}\int\limits_{0}^{t}g(s)\; ds dt
	\end{equation*}
	is a positive quaternionic compact operator. We know from \cite[Corollary 2.13]{GandP-MultiplicationForm} that $L^{2}\big([0,1]; \mathbb{C}; \mu\big)$ is an associated slice Hilbert space and let $(K^{\ast}K)_{+}$ be the bounded complex linear operator on $L^{2}\big([0,1]; \mathbb{C}; \mu\big)$ such that $\widetilde{(K^{\ast}K)}_{+} = K^{\ast}K$. Then by norm of the Voterra integral operator computed as in \cite[Solution 188]{Halmos} and $(1)$ of Proposition \ref{Proposition: Key}, we conclude that 
	\begin{equation*}
	\|K\| = \|K^{\ast}K\|^{\frac{1}{2}} = \|(K^{\ast}K)_{+}\|^{\frac{1}{2}} = \Big(\frac{1}{\pi^{2}}\Big)^{\frac{1}{2}} = \frac{1}{\pi} < \frac{1}{2}. 
	\end{equation*}
	Let us take $W:= M_{\chi_{[0, \frac{1}{3}]}}$ and $S = M_{\varphi}$, where $\varphi(x) = x$, for all $x \in [0,1]$. Clearly, $W$ is a partial isometry and since $S$ is normal with $\sigma_{p^{S}}(S) = \emptyset$, we see that $S$ is strongly irreducible from Theorem \ref{Theorem: emptypointspectrum}. Finally, we have that 
	\begin{equation*}
	T = (W+K)S.
	\end{equation*}
\end{eg}
We pose the following question.
\begin{question} \label{Question: Q}
	Let $T \colon \mathcal{D}(T) \subseteq \mathcal{H} \to \mathcal{H}$ be densely defined closed right $\mathbb{H}$- linear operator (need not be normal), where $\mathcal{D}(T)$ is the domain of $T$. Then, can $T$ be  factorized in a strongly irreducible sense ? 
\end{question}
We expect that, by using the notion of quaternionic Cowen-Douglas operators related to geometry of quaternionic Hilbert spaces developed in \cite{Hou} and further suitable arguments, may achieve affirmative answer to the Question \ref{Question: Q}. 

\section{Riesz Decomposition Theorem}
In this section we prove Riesz decomposition theorem for bounded quaternionic operators on right quaternionic Hilbert spaces and obtain a sufficient condition for strong irreducibility. We recall some definitions and known results form \cite{Alpay-R, Colombo, Ghiloni} that are useful to establish our result.
\begin{defn}\cite{Alpay-R}
	Let  $U \subseteq \mathbb{H}$ be an open set. Then 
	\begin{enumerate}
	    \item $U$ is said to be a \emph{slice domain} or \emph{$s$-domain}, if $U$ is a domain in $\mathbb{H}$ such that $U \cap \mathbb{R} \neq \emptyset$ and $U\cap \mathbb{C}_{m}$ is domain in $\mathbb{C}_{m}$, for all $m \in \mathbb{S}$.
	    \item A real differentiable function $f \colon U \to \mathbb{H}$ is said to be 
	            \begin{enumerate}
	         	\item[(i)] {\it left s-regular}, if for every $m \in \mathbb{S}$, the function $f$ satisfies
	         	\begin{equation*}
	         	\frac{1}{2} \Big[ \frac{\partial{}}{\partial{x}} f(x + m y) + m \; \frac{\partial{}}{\partial{y}} f(x + m y) \Big] = 0
	        	\end{equation*}
		     on $U\cap \mathbb{C}_{m}$. 
		        \item[(ii)] {\it right s-regular}, if for every $m \in \mathbb{S}$, the function $f$ satisfies
		        \begin{equation*}
		        \frac{1}{2} \Big[ \frac{\partial{}}{\partial{x}} f(x + m y) +  \frac{\partial{}}{\partial{y}} f(x + m y)\; m \Big] = 0
		        \end{equation*}
		        on $U\cap \mathbb{C}_{m}$.  
            	\end{enumerate}
	\end{enumerate}
Note that the class of left and right $s$-regular functions defined on $U$ is denoted by $\mathcal{R}^{L}(U)$ and $\mathcal{R}^{R}(U)$, respectively. One can verify that $\mathcal{R}^{L}(U)$ is a right $\mathbb{H}$-module, whereas $\mathcal{R}^{R}(U)$ is a left $\mathbb{H}$-module. 
\end{defn}
The following theorem describes a quaternionic analog of the \emph{Cauchy integral formula} for $s$-regular functions. 
\begin{thm}\cite[Theorem 4.5.3]{Colombo} \label{Theorem: CauchyIntegralFormula}
	Let $U \subseteq \mathbb{H}$ be an axially symmetric $s$-domain such that the boundary $\partial{(U\cap \mathbb{C}_{m})}$ is the union of a finite number of continuously differentiable Jordan curves for every $m \in \mathbb{S}$. Let $W$ be an open set containing $\overline{U}$ and take $ds_{m} = -ds \cdot m$ for any $m \in \mathbb{S}$. We have the following: 
	\begin{enumerate}
		\item If $f \colon W \to \mathbb{H}$ is left $s$-regular function, then 
		\begin{equation}\label{Equation:LeftCauchy}
		f(q) = -\frac{1}{2\pi} \int\limits_{\partial{(U\cap \mathbb{C}_{m})}} \big(q^{2}- 2 re(s)q+|s|^{2}\big)^{-1}(q-\overline{s}) ds_{m} f(s),
		\end{equation}
		for all $q\in U$. 
		\item If $f \colon W \to \mathbb{H}$ is right $s$-regular function, then 
		\begin{equation}\label{Equation:ReftCauchy}
		f(q) = -\frac{1}{2\pi} \int\limits_{\partial{(U\cap \mathbb{C}_{m})}} f(s) ds_{m} (q-\overline{s}) \big(q^{2}- 2 re(s)q+|s|^{2}\big)^{-1},
		\end{equation}
		for all $q\in U$. 
	\end{enumerate}
	Moreover, the  integrals appear in Equations (\ref{Equation:LeftCauchy}), (\ref{Equation:ReftCauchy}) does not depend on the choice of the imaginary unit $m \in \mathbb{S}$ and on $U$.
\end{thm}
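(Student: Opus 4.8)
The plan is to reduce the quaternionic formula to the classical one-variable Cauchy integral formula \emph{one slice at a time}, and then to propagate the resulting identity from a single slice to all of $U$ using the structure theory of slice regular functions. Fix $m\in\mathbb{S}$ and set
\begin{equation*}
g(q) := -\frac{1}{2\pi}\int_{\partial(U\cap\mathbb{C}_{m})}\big(q^{2}-2\,\mathrm{re}(s)q+|s|^{2}\big)^{-1}(q-\overline{s})\,ds_{m}\,f(s),
\end{equation*}
the right-hand side of~(\ref{Equation:LeftCauchy}). I would prove: (i) $g$ is left $s$-regular on $U$; (ii) $g=f$ on the slice $U\cap\mathbb{C}_{m}$. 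Granting (i) and (ii), the identity principle for slice regular functions on an axially symmetric $s$-domain --- equivalently the representation (structure) formula, which reconstructs a slice regular function on each sphere $[q]\subseteq U$ from its values on the slice $\mathbb{C}_{m}$ --- forces $g=f$ throughout $U$. Since the common value $f(q)$ refers to neither $m$ nor $U$, the independence assertions are then immediate, and the right $s$-regular case~(\ref{Equation:ReftCauchy}) follows by the symmetric argument (split $f|_{U\cap\mathbb{C}_{m}}=F+n\,G$ and use the right Cauchy kernel).

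For (i): the map $q\mapsto\big(q^{2}-2\,\mathrm{re}(s)q+|s|^{2}\big)^{-1}(q-\overline{s})$ is the left slice regular Cauchy kernel $S_{L}^{-1}(s,q)$, and its left $s$-regularity in $q$ on $\mathbb{H}\setminus[s]$ is a direct computation with the slice Cauchy--Riemann operator. Because $U$ is axially symmetric, so are $\overline{U}$ and $\partial U$ (using $\overline{\Omega}_{S}=\Omega_{\overline{S}}$, Equation~(\ref{Equation: closureofOmegaS})); hence for $s\in\partial(U\cap\mathbb{C}_{m})\subseteq\partial U$ the entire sphere $[s]$ lies in $\partial U$, while $[q]\subseteq U$ for $q\in U$, so $q\notin[s]$ and the integrand is nowhere singular. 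Since $W\supseteq\overline{U}$, $f$ is regular near the closed region, so differentiating under the integral sign is legitimate and keeps $g$ left $s$-regular. For (ii), work on the fixed slice $\mathbb{C}_{m}$ via the splitting lemma: pick $n\in\mathbb{S}$ with $mn=-nm$ and write $f|_{U\cap\mathbb{C}_{m}}=F+G\,n$ with $F,G\colon U\cap\mathbb{C}_{m}\to\mathbb{C}_{m}$ holomorphic; apply the classical Cauchy formula to $F$ and $G$ on the planar domain $U\cap\mathbb{C}_{m}$ (bounded, with boundary a finite union of $C^{1}$ Jordan curves) and recombine, using $ds_{m}=-ds\cdot m$ and $m^{-1}=-m$, to obtain $f(q)=\frac{1}{2\pi}\int_{\partial(U\cap\mathbb{C}_{m})}(s-q)^{-1}\,ds_{m}\,f(s)$ for $q\in U\cap\mathbb{C}_{m}$. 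Finally, on a single slice everything commutes, so $q^{2}-2\,\mathrm{re}(s)q+|s|^{2}=(q-s)(q-\overline{s})$ and therefore $\big(q^{2}-2\,\mathrm{re}(s)q+|s|^{2}\big)^{-1}(q-\overline{s})=(q-s)^{-1}=-(s-q)^{-1}$; substituting this into the definition of $g$ cancels the leading minus sign and yields $g(q)=f(q)$ for $q\in U\cap\mathbb{C}_{m}$, which is exactly (ii).

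The main obstacle is the non-commutativity of $\mathbb{H}$, and it bites in two ways. First, the whole computation must respect multiplication order: the kernel $S_{L}^{-1}(s,q)$ has to sit to the \emph{left} of $ds_{m}f(s)$ to match left $s$-regularity, the splitting lemma must place the $\mathbb{C}_{m}$-valued factor on the correct side, and the conversion $\frac{1}{2\pi i}\,ds=\frac{1}{2\pi}\,ds_{m}$ relies on $m$ commuting with the other slice-valued quantities --- none of this is automatic and it must be checked at each step. Second, and more essentially, the slice-by-slice reduction is only the elementary half; the real weight of the theorem is carried by the structure theory used in (i) and (ii) --- the splitting lemma, the $s$-regularity of the Cauchy kernel in $q$, and above all the representation formula / identity principle, which have no one-variable analogue and are precisely what lets one pass from a single slice to the whole axially symmetric domain.
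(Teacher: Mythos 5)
This theorem is quoted from \cite[Theorem 4.5.3]{Colombo} and the paper supplies no proof of its own, so there is nothing internal to compare against; your outline is the standard argument from that reference and is sound. The reduction --- restrict to a slice, use the splitting lemma to invoke the classical Cauchy formula for the $\mathbb{C}_{m}$-valued holomorphic components, observe that on the slice the kernel collapses to $(q-s)^{-1}$ because $q$ and $s$ commute there, and then propagate the identity off the slice via the left $s$-regularity of the Cauchy kernel in $q$ together with the representation formula/identity principle on an axially symmetric $s$-domain --- is exactly how the result is established, and you correctly identify the two places where non-commutativity must be watched (the side on which the kernel multiplies $ds_{m}f(s)$, and the conversion $ds_{m}=-ds\cdot m$). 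The only caveat is that the splitting lemma, the regularity of the kernel, and the representation formula are themselves substantial lemmas that your sketch cites rather than proves, but naming them as the load-bearing ingredients is appropriate for a result of this kind.
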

Note that  the kernels $- \big(q^{2}- 2 re(s)q+|s|^{2}\big)^{-1}(q-\overline{s}) $ and $- (q-\overline{s}) \big(q^{2}- 2 re(s)q+|s|^{2}\big)^{-1}$ in the Theorem \ref{Theorem: CauchyIntegralFormula} as the limit of corresponding Cauchy kernel series $\sum\limits_{n=0}^{\infty}q^{n} s^{-1-n}$ and $\sum\limits_{n=0}^{\infty}s^{-1-n}q^{n}$, respectively for $|q|< |s|$.
\subsection*{The quaternionic functional calculus}
Let $\mathcal{H}$ be a right quaternionic Hilbert space and let $\mathcal{N}$ be a Hilbert basis of $\mathcal{H}$. It is immediate to see that the class of all bounded right $\mathbb{H}$- linear operators denoted by $\mathcal{B}(\mathcal{H})$ is a two sided quaternionic Banach module with respect to the module actions given by
\begin{equation*}
(q \cdot T)(x) := \sum\limits_{z \in \mathcal{N}} z \cdot q \ \langle z, Tx\rangle \;\; \text{and}\;\; (T\cdot q)(x) := \sum\limits_{z \in \mathcal{N}} T(z)\cdot q\ \langle z, x\rangle, 
\end{equation*}  
for all $T \in \mathcal{B}(\mathcal{H}), q\in \mathbb{H}, x \in \mathcal{H}$. In particular, for an identity operator $I \in \mathcal{B}(\mathcal{H})$, we have
\begin{equation*}
    (q \cdot I)(x) = \sum\limits_{z \in \mathcal{N}} z \cdot q \ \langle z, x\rangle = (I\cdot q)(x), \;\text{for all}\; x \in \mathcal{H}, q \in \mathbb{H}.
\end{equation*}
Next, we recall the notion of the spherical resolvent operator and  the spherical resolvent equation which plays a vital role in establishing quaternionic functional calculus.
\begin{defn}\cite[Definition 4.8.3]{Colombo}\label{Definition: sliceregular}
	Let $T\in \mathcal{B}(\mathcal{H})$ and $s \in \rrho_{S}(T)$. Then the \emph{left spherical resolvent operator} is defined by 
	\begin{equation*}
	S_{L}^{-1}(s, T):= - {\Delta_{s}(T)}^{-1} (T-\overline{s}I) = \sum\limits_{n=0}^{\infty}T^{n}s^{-1-n},
	\end{equation*}
	and the \emph{right spherical resolvent operator} by
	\begin{equation*}
	S_{R}^{-1}(s, T) := - (T-\overline{s}I){\Delta_{s}(T)}^{-1} =\sum\limits_{n=0}^{\infty}s^{-1-n}\; T^{n},
	\end{equation*}
	for $\|T\|<|s|$. 
\end{defn}

\begin{thm}\cite{Colombo} Let $T \in \mathcal{B}(\mathcal{H})$ and $s \in \rrho_{S}(T)$. Then the left and the right Spherical  resolvent operator satisfies the following relations:
	\begin{equation} \label{Equation: S_{L}}
	S_{L}^{-1}(s,T)s- TS_{L}^{-1}(s,T) = I
	\end{equation}
	and
	\begin{equation}\label{Equation: S_{R}}
	s S_{R}^{-1}(s,T)- S_{R}^{-1}(s,T)T=I. 
	\end{equation}
\end{thm}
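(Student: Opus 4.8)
The plan is to prove the two identities first in the region $\{s \in \mathbb{H}\, :\, |s| > \|T\|\}$, where $S_{L}^{-1}(s,T)$ and $S_{R}^{-1}(s,T)$ are given by the norm-convergent series of Definition \ref{Definition: sliceregular}, and then to extend them to an arbitrary $s \in \rrho_{S}(T)$ via the closed-form expressions $S_{L}^{-1}(s,T)= -\Delta_{s}(T)^{-1}(T-\overline{s}I)$ and $S_{R}^{-1}(s,T)= -(T-\overline{s}I)\Delta_{s}(T)^{-1}$.

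\textbf{The series regime.} For $|s| > \|T\|$ the series $\sum_{n=0}^{\infty} T^{n}s^{-1-n}$ converges absolutely in operator norm, since $\|T^{n}s^{-1-n}\| \leq \|T\|^{n}|s|^{-1-n}$ and $\|T\|/|s| < 1$; the same holds for $\sum_{n=0}^{\infty}s^{-1-n}T^{n}$. All powers $s^{-1-n}$ lie in the slice $\mathbb{C}_{m_{s}}$, hence commute with one another and satisfy $s^{-1-n}s = s^{-n}$. Using associativity of the right module action together with the fact that composing on the left with the right-$\mathbb{H}$-linear operator $T$ commutes with the right scalar action, I would compute term by term
\begin{align*}
S_{L}^{-1}(s,T)\, s &= \sum_{n=0}^{\infty}T^{n}s^{-1-n}s = \sum_{n=0}^{\infty}T^{n}s^{-n}, \\
T\, S_{L}^{-1}(s,T) &= \sum_{n=0}^{\infty}T^{n+1}s^{-1-n} = \sum_{n=1}^{\infty}T^{n}s^{-n},
\end{align*}
so that the difference telescopes to $T^{0}s^{0}=I$, which is \eqref{Equation: S_{L}}. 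Symmetrically, $s\, S_{R}^{-1}(s,T) = \sum_{n\geq 0}s^{-n}T^{n}$ and $S_{R}^{-1}(s,T)\, T = \sum_{n\geq 1}s^{-n}T^{n}$, whose difference is $I$, giving \eqref{Equation: S_{R}}.

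\textbf{General $s \in \rrho_{S}(T)$.} Since $\Delta_{s}(T) = T^{2} - 2\, re(s)\, T + |s|^{2}I$ has real coefficients, it commutes with $T$, and hence so does $\Delta_{s}(T)^{-1}$. Therefore
\begin{align*}
S_{L}^{-1}(s,T)\, s - T\, S_{L}^{-1}(s,T)
&= -\Delta_{s}(T)^{-1}(T-\overline{s}I)\, s + \Delta_{s}(T)^{-1}\, T\,(T-\overline{s}I) \\
&= \Delta_{s}(T)^{-1}\Big[\, T(T-\overline{s}I) - (T-\overline{s}I)\, s\, \Big].
\end{align*}
The bracket is simplified using the module identities $T\,(\overline{s}I) = T\overline{s}$ and $(\overline{s}I)\, s = \overline{s}s\, I = |s|^{2}I$ (associativity of the two-sided action and $q\cdot I = I\cdot q$), together with $T\overline{s} + Ts = T(\overline{s}+s) = 2\, re(s)\, T$ (the real scalar $\overline{s}+s$ acting in the usual way). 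Hence the bracket equals $T^{2} - 2\, re(s)\, T + |s|^{2}I = \Delta_{s}(T)$, and the whole expression collapses to $I$, proving \eqref{Equation: S_{L}} for every $s \in \rrho_{S}(T)$. Equation \eqref{Equation: S_{R}} follows the same way, now keeping $\Delta_{s}(T)^{-1}$ on the right and using $s\,(\overline{s}I) = |s|^{2}I$ and $(\overline{s}I)\, T = \overline{s}T$.

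\textbf{Main obstacle.} The computations themselves are short; the point requiring care is the bookkeeping in the second step, because in the two-sided quaternionic module $\mathcal{B}(\mathcal{H})$ the left and right actions of a non-real quaternion on an operator are genuinely different and neither commutes with $T$ in general. The identity closes only because the sole non-real scalar involved is $s$ (and $\overline{s}$), while $s\overline{s} = |s|^{2}$ and $s+\overline{s} = 2\, re(s)$ are real, so every quaternionic ambiguity is absorbed. An alternative that sidesteps this is to note that both sides of \eqref{Equation: S_{L}} are $\mathcal{B}(\mathcal{H})$-valued functions of $s$ that are (right) slice regular on $\rrho_{S}(T)$ and agree on the unbounded slice domain $\{|s|>\|T\|\}$, and then to invoke the identity principle; the cost there is justifying that principle in the operator-valued setting on the possibly disconnected resolvent set.
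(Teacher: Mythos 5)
Your second step (the computation with $S_{L}^{-1}(s,T)=-\Delta_{s}(T)^{-1}(T-\overline{s}I)$, using that $T$ commutes with $\Delta_{s}(T)^{-1}$ and that the bracket reassembles to $\Delta_{s}(T)$) is exactly the paper's proof, and it is correct for every $s\in\rrho_{S}(T)$. The preliminary series argument for $|s|>\|T\|$ is therefore redundant, but it does no harm.
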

\begin{proof}
	We know that $T$ commutes with $\Delta_{s}(T)$ and so with $\Delta_{s}(T)^{-1}$. By the Definition \ref{Definition: sliceregular}, we get
	\begin{align*}
	S_{L}^{-1}(s,T)s- TS_{L}^{-1}(s,T) &= -\Delta_{s}(T)^{-1}(T-\overline{s}I)s + T\Delta_{s}(T)^{-1}(T-\overline{s}I)\\
	&= \Delta_{s}(T)^{-1}\Big[-(T-\overline{s}I)s+T(T-\overline{s}I)\Big]\\
	&= \Delta_{s}(T)^{-1}\Delta_{s}(T)\\
	&= I
	\end{align*}
and
	\begin{align*}
	s S_{R}^{-1}(s,T)- S_{R}^{-1}(s,T)T &= s (T-\overline{s}I)\Delta_{s}(T)^{-1} - (T-\overline{s}I)\Delta_{s}(T)^{-1}T \\
	& = \Big[s (T-\overline{s}I)-(T-\overline{s}I)T\Big]\Delta_{s}(T)^{-1}\\
	&= \Delta_{s}(T)\Delta_{s}(T)^{-1}\\
	&= I.
	\end{align*}
	Hence the result.
\end{proof}
\begin{note} Let $A$ be a bounded linear operator on some complex Hilbert space $\mathcal{K}$ and $\lambda \in \rrho(A)$, the resolvent set of $A$. Then $(\lambda I - A)$ is invertible and its inverse is given by the following power series, 
	\begin{equation*}
	(\lambda I - A)^{-1} = \sum\limits_{n=0}^{\infty}\frac{1}{\lambda^{n+1}} A^{n}, \; \; \text{for}\;\; \|A\|< \lambda.
	\end{equation*}
	Moreover, if $\lambda, \mu \in \rho(A)$, then we have the following relation known as resolvent equation:
	\begin{equation}\label{Equation: Requation}
	(\lambda I - A)^{-1} - (\mu I-A)^{-1} = (\mu-\lambda)(\lambda I - A)^{-1}(\mu I - A)^{-1}.
	\end{equation}
%
\end{note} 
One of the crucial observation in estabilshing the quaternionic functional calculus is the spherical resolvent equation whichi is a quaternionic analogue of Equation (\ref{Equation: Requation}). We recall the result here.

\begin{thm}\cite[Theorem 3.8]{Alpay-R}\label{Theorem: S-resolventequation}
	Let $T \in \mathcal{B}(\mathcal{H})$ and let $s,p \in \rrho_{S}(T)$. Then the spherical resolvent equation is given by 
	\begin{align*}
	&S_{R}^{-1}(s, T)S_{L}^{-1}(p, T) \\
	&=  \Big[\big(S_{R}^{-1}(s, T) - S_{L}^{-1}(p, T)\big)p -\overline{s}\big(S_{R}^{-1}(s, T) - S_{L}^{-1}(p, T)\big)\Big] (p^{2}-2\text{re}(s)p+|s|^{2})^{-1}.
	\end{align*}
	Equivalently, 
	\begin{align*}
	&S_{R}^{-1}(s, T)S_{L}^{-1}(p, T) \\
	&=  (s^{2}-2\text{re}(p)s+|p|^{2})^{-1} \Big[\big(S_{R}^{-1}(s, T) - S_{L}^{-1}(p, T)\big)p -\overline{s}\big(S_{R}^{-1}(s, T) - S_{L}^{-1}(p, T)\big)\Big] .
	\end{align*}
\end{thm}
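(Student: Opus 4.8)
\textbf{Proof proposal for the spherical resolvent equation (Theorem \ref{Theorem: S-resolventequation}).}

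The plan is to mimic the classical derivation of the resolvent equation \eqref{Equation: Requation}, but with care taken for the non-commutativity of $\mathbb{H}$ and the fact that the spherical resolvent operators are built from the \emph{second-order} operator polynomial $\Delta_s(T) = T^2 - 2\,\text{re}(s)T + |s|^2 I$ rather than a first-order one. The starting point is the pair of identities \eqref{Equation: S_{L}} and \eqref{Equation: S_{R}}, which I would rewrite as $T S_L^{-1}(p,T) = S_L^{-1}(p,T)p - I$ and $S_R^{-1}(s,T) T = s S_R^{-1}(s,T) - I$. First I would form the product $S_R^{-1}(s,T)\,S_L^{-1}(p,T)$ and insert, in two different ways, the trivial factor $I$ rewritten using $\Delta_p(T)\Delta_p(T)^{-1} = I$ (equivalently $\Delta_s(T)\Delta_s(T)^{-1}=I$): the point is to maneuver the operator $T^2$, $T$, and constant terms that appear in $\Delta$ past the two resolvent operators using the first-order identities above, together with the fact that $T$ commutes with every $\Delta_q(T)$ and hence with $\Delta_q(T)^{-1}$, and that real scalars are central in $\mathbb{H}$.

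Concretely, the key step is the algebraic manipulation
\begin{align*}
S_R^{-1}(s,T)\,S_L^{-1}(p,T)\, p - \overline{s}\, S_R^{-1}(s,T)\,S_L^{-1}(p,T)
&= S_R^{-1}(s,T)\big(S_L^{-1}(p,T)p\big) - \big(\overline{s}\,S_R^{-1}(s,T)\big)S_L^{-1}(p,T),
\end{align*}
where in the first term I substitute $S_L^{-1}(p,T)p = I + T S_L^{-1}(p,T)$ from \eqref{Equation: S_{L}}, and in the second term I substitute $\overline{s}\,S_R^{-1}(s,T) = \overline{s}\,S_R^{-1}(s,T)$ and then push a factor of $T$ in from $S_R^{-1}(s,T)T = sS_R^{-1}(s,T)-I$ so as to produce matching $S_R^{-1}(s,T)\,T\,S_L^{-1}(p,T)$ and $S_R^{-1}(s,T)\,T^2\,S_L^{-1}(p,T)$ terms. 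After collecting, the right-hand side should telescope to $S_R^{-1}(s,T)\Delta_p(T)S_L^{-1}(p,T)$ plus lower-order leftover terms involving only $\text{re}(p)$, $|p|^2$, $\overline s$, and a single $S_R^{-1}$ or $S_L^{-1}$. Multiplying on the right by $\Delta_p(T)^{-1} = (p^2 - 2\text{re}(p)p + |p|^2)^{-1}$ — here I would use that the operator $\Delta_p(T)^{-1}$ acts on the $S_L^{-1}(p,T)$ side exactly as the scalar $(p^2 - 2\text{re}(p)p+|p|^2)^{-1}$ via the series expansion of $S_L^{-1}$, so that $\Delta_p(T)S_L^{-1}(p,T)$ simplifies — yields the first displayed form. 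The second (equivalent) form follows by the symmetric computation, multiplying instead on the left by $\Delta_s(T)^{-1}$ and using that it interacts with $S_R^{-1}(s,T)$ as multiplication by the scalar $(s^2 - 2\text{re}(s)s + |s|^2)^{-1}$.

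I expect the main obstacle to be bookkeeping in the non-commutative setting: one must be scrupulous about the side (left vs.\ right) on which each quaternion scalar $p$, $\overline s$, $\text{re}(p)$, $|s|^2$ acts, since $S_R^{-1}(s,T)$ and $S_L^{-1}(p,T)$ are defined by series with the scalars on opposite sides of the powers of $T$, and $p$ and $s$ need not commute in $\mathbb{H}$. The only genuinely structural inputs are \eqref{Equation: S_{L}}, \eqref{Equation: S_{R}}, the centrality of real scalars, and the commutation of $T$ with $\Delta_q(T)^{\pm 1}$; everything else is a careful but routine rearrangement. An alternative, cleaner route would be to verify the identity first on the region $\|T\| < \min\{|s|,|p|\}$ by substituting the power series $S_R^{-1}(s,T) = \sum_n s^{-1-n}T^n$ and $S_L^{-1}(p,T) = \sum_n T^n p^{-1-n}$ into both sides, reducing to a scalar identity among formal series in $T$, and then extending to all of $\rrho_S(T)$ by the analyticity (slice-regularity) of both sides in $s$ and $p$; I would mention this as the fallback if the direct operator computation proves unwieldy.
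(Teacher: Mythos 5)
First, note that the paper does not prove this statement at all: it is quoted verbatim (as \cite[Theorem 3.8]{Alpay-R}) and used as a black box, so there is no in-paper argument to compare against; I am judging your proposal on its own. Your general instinct — derive the identity algebraically from \eqref{Equation: S_{L}} and \eqref{Equation: S_{R}} — is the right one, but your central step is broken. You propose to "multiply on the right by $\Delta_p(T)^{-1}=(p^2-2\text{re}(p)p+|p|^2)^{-1}$", conflating the \emph{operator} $\Delta_p(T)^{-1}=(T^2-2\text{re}(p)T+|p|^2I)^{-1}$ with a quaternion scalar. That scalar does not exist: every quaternion is a root of its own companion polynomial, $p^2-2\text{re}(p)p+|p|^2=p^2-p(p+\overline{p})+p\overline{p}=0$, and likewise $s^2-2\text{re}(s)s+|s|^2=0$. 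The factor actually appearing in the theorem is the \emph{mixed} quantity $(p^2-2\text{re}(s)p+|s|^2)^{-1}$ — the companion polynomial of $s$ evaluated at $p$ — which is invertible precisely when $p\notin[s]$ (a hypothesis the statement tacitly requires). Missing this is missing the essential structural difference between the $S$-resolvent equation and the classical resolvent equation \eqref{Equation: Requation}: the "denominator" couples the two spectral parameters, and no insertion of $\Delta_p(T)\Delta_p(T)^{-1}=I$ will produce it.

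The repair is short and worth recording. Put $X=S_R^{-1}(s,T)S_L^{-1}(p,T)$ and $Y=S_R^{-1}(s,T)-S_L^{-1}(p,T)$. Multiplying \eqref{Equation: S_{L}} on the left by $S_R^{-1}(s,T)$ and eliminating $S_R^{-1}(s,T)T$ via \eqref{Equation: S_{R}} gives the Sylvester-type relation $Xp-sX=Y$. Iterating it (using only that left and right module actions commute and that $2\text{re}(s)$, $|s|^2$ are central real scalars) yields
\begin{equation*}
X\big(p^{2}-2\text{re}(s)p+|s|^{2}\big)=Xp^{2}-2\text{re}(s)Xp+|s|^{2}X=Yp-\overline{s}\,Y,
\end{equation*}
and dividing by the nonzero quaternion $p^{2}-2\text{re}(s)p+|s|^{2}$ gives the first display. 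Two further cautions. (i) Your fallback via power series is not "a scalar identity among formal series in $T$": in $\sum_{n,m}s^{-1-n}T^{n+m}p^{-1-m}$ the coefficients sit on opposite sides of $T$ and do not commute with each other, and the continuation from $\{|s|,|p|>\|T\|\}$ to all of $\rrho_{S}(T)$ needs an identity principle on a possibly disconnected axially symmetric set — neither step is routine. (ii) The second "equivalent" display is misstated in the paper as quoted: the symmetric computation gives $\big(s^{2}-2\text{re}(p)s+|p|^{2}\big)X=Y\overline{p}-sY$, not $Yp-\overline{s}Y$ (check $s=i$, $p=2j$, $T=0$: then $X=k/2$, while the display as printed evaluates to $-k/2$). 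So a literal proof of the second form as written would necessarily fail.
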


\begin{defn}\label{Definition: locallyregular}
    Let $T \in \mathcal{B}(\mathcal{H})$, $W \subseteq \mathbb{H}$ be open and $U \subseteq \mathbb{H}$ be a domain in $\mathbb{H}$. Then 
    \begin{enumerate}
    \item $U$ is said to be a {\it T -admissible open set}, if $U$ is axially symmetric $s$-domain that contains the spherical spectrum $\sigma_{S}(T)$ such that the boundary $\partial{(U\cap \mathbb{C}_{m})}$ is the union of a finite number of continuously differentiable Jordan curves, for every $m \in \mathbb{S}$.
		\item A function $f \in \mathcal{R}^{L}(W)$ is said to be \emph{ locally left regular} function on $\sigma_{S}(T)$, if there is $T$-admissible domain $U$ in $\mathbb{H}$ such that $\overline{U} \subseteq W$.
		\item A function $f \in \mathcal{R}^{R}(W)$ is said to be  \emph{ locally right regular} function on $\sigma_{S}(T)$, if there is $T$-admissible domain $U$ in $\mathbb{H}$ such that $\overline{U} \subseteq W$.  
	\end{enumerate}
	The class of all locally left and locally right regular functions on $\sigma_{S}(T)$ are denoted by  $\mathcal{R}^{L}_{\sigma_{S}(T)}$ and $\mathcal{R}^{R}_{\sigma_{S}(T)}$ respectively.
\end{defn}

Finally, by using Theorem \ref{Theorem: CauchyIntegralFormula} and the quaternionic version of Hahn Banach theorem \cite[Theorem 4.1.10]{Colombo}, the quaternionic functional calculus is defined as below.
\begin{defn}\cite[Definition 4.10.4]{Colombo}\label{Definition: quaternionicfunctionalcalculus}(quaternionic functional calculus)
	Let $T \in \mathcal{B}(\mathcal{H})$ and  $U\subset \mathbb{H}$ be a $T$-admissible domain. Then 
	\begin{equation}\label{Equation: Leftregular}
	f(T) = \frac{1}{2\pi}\int\limits_{\partial{(U\cap \mathbb{C}_{m}})} S_{L}^{-1}(s, T) ds_{m}\; f(s), \; \text{for all}\; f\in \mathcal{R}^{L}_{\sigma_{S}(T)} 
	\end{equation}
	and 
	\begin{equation}\label{Equation: Rightregular}
	f(T) = \frac{1}{2\pi}\int\limits_{\partial{(U\cap \mathbb{C}_{m}})} f(s) ds_{m} \;S_{R}^{-1}(s, T) , \; \text{for all}\; f\in \mathcal{R}^{R}_{\sigma_{S}(T)} ,
	\end{equation}
	where $ds_{m}= - ds\cdot m$. Note that the integrals that appear in Equations (\ref{Equation: Leftregular}), (\ref{Equation: Rightregular}) are independent of the choice of imaginary unit $m \in \mathbb{S}$ and $T$-admissible domain $U$.

\end{defn}
\subsection*{Riesz decomposition theorem}
Before proving our result, let us discuss the adjoint of the operator $f(T)$ defined as in  Definition \ref{Definition: quaternionicfunctionalcalculus}. 

\begin{rmk}\label{Remark: adjoint}
	Let $T \in \mathcal{B}(\mathcal{H})$ and $W$ be an axially symmetric open set in $\mathbb{H}$. For every $f \colon W \to \mathbb{H}$, we define  $\hat{f} \colon W \to \mathbb{H}$ by 
	\begin{equation*}
	\hat{f}(q) = \overline{f(\overline{q})}, \; \text{for all}\;  q\in W.
	\end{equation*}
 Let $f \in \mathcal{R}^{L}(W)$.  Then for every $m \in \mathbb{S}$ and $x,y \in \mathbb{R}$, we see that
	\begin{align*}
	\frac{\partial{}}{\partial{x}}\hat{f}(x+my) +& m \; \frac{\partial{}}{\partial{y}}\hat{f}(x+my) \\
	&= \frac{\partial{}}{\partial{x}}\overline{f(x-my)} + m \; \frac{\partial{}}{\partial{y}}\overline{f(x-my)}\\
	&= \overline{\frac{\partial{}}{\partial{x}}{f(x-my)} - m \; \frac{\partial{}}{\partial{y}}{f(x-my)}}\\
	&= \overline{\frac{\partial{}}{\partial{x}}{f(x+ \overline{m}y)} + \overline{m} \; \frac{\partial{}}{\partial{y}}{f(x+\overline{m}y)}}\\
	&= 0. 
	\end{align*}
	This show that $\hat{f}\in \mathcal{R}^{L}(W)$. Further, if we assume that $f$ is locally left regular function that is, $f \in  \mathcal{R}^{L}_{\sigma_{S}(T)}$ then by Definition \ref{Definition: locallyregular}, there is a $T$-admissible domain $U$ such that $\overline{U} \subseteq W$. Since $\sigma_{S}(T) = \sigma_{S}(T^{\ast})$ and by the above arguments, we conclude that $\hat{f} \in \mathcal{R}^{L}_{\sigma_{S}(T)}$. Now we compute the adjoint of $f(T)$, whenever $f \in \mathcal{R}^{L}_{\sigma_{S}(T)}$, as follows:
	\begin{align*}
	\big\langle x, f(T)y \big\rangle  &= \frac{1}{2\pi} \int\limits_{\partial{(U\cap \mathbb{C}_{m})}} \Big\langle x,\; S_{L}^{-1}(s, T) y \Big\rangle\; ds_{m}\; f(s)\\
	&= \frac{1}{2\pi} \int\limits_{\partial{(U\cap \mathbb{C}_{m})}} \Big\langle S_{L}^{-1}(s, T)^{\ast} x,\;  y \Big\rangle\; ds_{m}\; f(s) \\
	&= \frac{1}{2\pi} \int\limits_{\partial{(U\cap \mathbb{C}_{m})}} \Big\langle S_{R}^{-1}(\overline{s}, T^{\ast}) x,\;  y \Big\rangle\; ds_{m}\; f(s).
	\end{align*} 
	If we put ${s} = \overline{t}$, then $ds = d\overline{t}$ and $ds_{m} = -d\overline{t}\ m$. Since the integration over the domain $\partial(U\cap \mathbb{C}_{m})$ which is symmetric about the real line, we see that $\overline{d\overline{t}_{m}} = dt_{m}$. Thus above integral can be modified as, 
	\begin{align*}
	\big\langle x, f(T)y \big\rangle  &= \frac{1}{2\pi} \int\limits_{\partial{(U\cap \mathbb{C}_{m})}} \Big\langle S_{R}^{-1}(t, T^{\ast}) x,\;  y \Big\rangle d\overline{t}_{m} f(\overline{t})\\
	&= \frac{1}{2\pi}\overline{ \int\limits_{\partial{(U\cap \mathbb{C}_{m})}} \overline{f(\overline{t})} \; \overline{d\overline{t}_{m}}\Big\langle y, \; S_{R}^{-1}(t, T^{\ast}) x\Big\rangle  }\\
	&= \frac{1}{2\pi} \overline{ \Big\langle y, \; \int\limits_{\partial{(U\cap \mathbb{C}_{m})}} \hat{f}(t) \; dt_{m}\; S_{R}^{-1}(t, T^{\ast}) x\Big\rangle  }\\
	&=\big\langle \hat{f}(T^{\ast})x,\; y \big\rangle,
	\end{align*}
	for all $x,y \in \mathbb{H}$. Therefore, $f(T)^{\ast} = \hat{f}(T^{\ast})$ for all $f \in \mathcal{R}^{L}_{\sigma_{S}(T)}$. Similarly, the result holds true for  $\mathcal{R}^{R}_{\sigma_{S}(T)}$. For further details about algebraic properties of quaternionic functional calculus, we refer the reader to \cite[Proposition 4.11.1]{Colombo}. 
\end{rmk}


In the following lemma, we show that for any compact set in $\mathbb{H}$, there is an axially symmetric $s$-domain such that its intersection with $\mathbb{C}_{m}$ is a Cauchy domain in $\mathbb{C}_{m}$, for every $m \in \mathbb{S}$. 
	\begin{lemma} \label{Lemma: Cauchydomain}
	Let $K$ be an axially symmetric compact subset of $\mathbb{H}$ and $W$ be an axially symmetric $s$- domain containing $K$. Then there is an axially symmetric $s$- domain $U$ with the boundary $\partial{(U\cap \mathbb{C}_{m})}$ is the union of a finite number of continuously differentiable Jordan curves (for every $m \in \mathbb{S}$) such that $K \subset U$ and  $\overline{U} \subset W$. 
\end{lemma}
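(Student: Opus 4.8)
The plan is to reduce the quaternionic statement to a classical fact about Cauchy domains in the complex plane, using the slice structure of axially symmetric sets. Since $K$ is axially symmetric and compact, we may write $K = \Omega_S$ where $S = K \cap \mathbb{C}_i$ is a compact subset of $\mathbb{C}_i \cong \mathbb{C}$ that is invariant under complex conjugation; similarly $W \cap \mathbb{C}_i$ is a bounded open set in $\mathbb{C}_i$ invariant under conjugation and containing $S$. The key classical input I would invoke is the standard existence theorem for Cauchy domains: given a compact set $S$ contained in an open set $V$ in $\mathbb{C}$, there is a Cauchy domain $D$ (an open set whose boundary consists of finitely many pairwise disjoint rectifiable, indeed piecewise smooth, Jordan curves) with $S \subset D$ and $\overline{D} \subset V$. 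One can get this by covering $S$ with finitely many small open squares (or disks) whose closures lie in $V$, taking the interior of the union of their closures, and smoothing the corners of the resulting polygonal boundary; this is exactly the construction found, e.g., in Gohberg--Goldberg--Kaashoek or Rudin's functional analysis.

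The care needed here is that $U$ must be \emph{axially symmetric}, which forces the planar domain $D$ in $\mathbb{C}_i$ to be symmetric about the real axis, and must be an \emph{$s$-domain}, which requires $U \cap \mathbb{R} \neq \emptyset$ and connectedness of each slice. First I would apply the classical result to $S' = S \cup \overline{S} = S$ (already conjugation-symmetric) inside $V = W \cap \mathbb{C}_i$; but to ensure the output is conjugation-symmetric I would instead apply it to produce some Cauchy domain $D_0$ with $S \subset D_0 \subset \overline{D_0} \subset V$ and then replace $D_0$ by $D_0 \cap \overline{D_0}$ (reflection across $\mathbb{R}$). The intersection of two Cauchy domains is again a Cauchy domain after a further corner-smoothing, so $D := \mathrm{int}(\overline{D_0 \cap \overline{D_0}})$, suitably smoothed, is a conjugation-symmetric Cauchy domain with $S \subset D$ and $\overline{D} \subset V$. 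Then I would define $U := \Omega_{D}$, the circularization of $D$. By construction $U$ is axially symmetric, $K = \Omega_S \subset \Omega_D = U$, and since $\overline{\Omega_D} = \Omega_{\overline{D}}$ by Equation (\ref{Equation: closureofOmegaS}), we get $\overline{U} = \Omega_{\overline{D}} \subset \Omega_V = W$.

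It remains to check that $U = \Omega_D$ is an $s$-domain with the required boundary regularity on every slice. For any $m \in \mathbb{S}$ the slice $U \cap \mathbb{C}_m$ is, via the real-affine identification $\alpha + m\beta \leftrightarrow \alpha + i\beta$, precisely the planar domain $D$ (because $D$ is conjugation-symmetric, so it transports identically to each $\mathbb{C}_m$); hence $\partial(U \cap \mathbb{C}_m)$ is the union of finitely many continuously differentiable Jordan curves, uniformly in $m$. Connectedness of $U$: $\Omega_D$ is the union of the $2$-spheres $[z]$, $z \in D$, together with the real points of $D$, and since $D$ is connected and meets $\mathbb{R}$ (as it is a conjugation-symmetric open set containing the conjugation-symmetric set $S$ — if $S$ fails to meet $\mathbb{R}$ one enlarges $D$ slightly along a segment to $\mathbb{R}$, staying inside $V$ which does meet $\mathbb{R}$ because $W$ is an $s$-domain), $\Omega_D$ is connected; thus $U$ is an $s$-domain. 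The main obstacle, and the only genuinely delicate point, is the book-keeping that guarantees simultaneous conjugation-symmetry \emph{and} piecewise-$C^1$ Jordan-curve boundary after the square-covering and corner-smoothing — i.e., arranging the smoothing to respect the reflection symmetry so that it does not introduce non-smooth points on the real axis; this is handled by choosing the initial covering squares symmetrically about $\mathbb{R}$ and performing the corner-rounding equivariantly.
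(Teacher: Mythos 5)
Your proposal is correct and follows essentially the same route as the paper: reduce to a single slice $\mathbb{C}_m$, construct a conjugation-symmetric planar Cauchy domain squeezed between $K\cap\mathbb{C}_m$ and $W\cap\mathbb{C}_m$, circularize it, and use $\overline{\Omega}_{S}=\Omega_{\overline{S}}$ to get $\overline{U}\subset W$. The only difference is in how the planar domain is built — the paper covers $K\cap\mathbb{C}_m$ directly by finitely many discs of radius $\tfrac{r}{2}$ (where $r$ is the distance from $K\cap\mathbb{C}_m$ to the complement of $W\cap\mathbb{C}_m$), so the boundary is automatically a finite union of circular arcs, whereas you invoke the classical square-covering Cauchy-domain theorem and then symmetrize and smooth; your explicit attention to conjugation symmetry is in fact a point the paper glosses over.
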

\begin{proof}
	Let us fix $m \in \mathbb{S}$. We define $K_{m}:= K\cap \mathbb{C}_{m}$ and $W_{m}:= W \cap \mathbb{C}_{m}$. Then  $K_{m}$ is a compact subset of the open set $W_{m}$ and hence $K_{m}$ is separated by a positive distance from the closed set $\mathbb{C}_{m}\setminus W_{m}$. That is, 
	\begin{equation*}
	d(K_{m},\; \mathbb{C}_{m}\setminus W_{m}) := r , \; \text{for some}\; r>0.
	\end{equation*}
	Here $``d"$ is the Eucledian metric on the slice complex plane $\mathbb{C}_{m}$. Being a compact set, if $K_{m}$ is covered by open discs  of radius $\frac{r}{2}$ with center in $K_{m}$, then there is a finite collection $\mathcal{U} = \{U_{1}^{(m)}, U_{2}^{(m)}, \cdots , U_{\ell}^{(m)}\}$ of open discs such that
	\begin{equation*}
	K_{m} \subseteq \bigcup\limits_{t=1}^{\ell} U_{t}^{(m)}.
	\end{equation*}
	It is clear that the set defined by $U_{m}:= \bigcup\limits_{t=1}^{\ell}U_{t}^{(m)}$ is open and the boundary $\partial{U_{m}}$ is a finite union of arcs of  $\partial{U^{(m)}_{t}}$ (circles),  for $t = 1,2,\cdots \ell$. This follows that $\overline{U}_{m} \subseteq W_{m }$. Finally, we define $U:= \Omega_{U_{m}}$. Since the center of each disc $U_{t}^{(m)}$ lies on the real line, we see that $U \cap \mathbb{R} \neq \emptyset$ and $U\cap \mathbb{C}_{m}$ is a domain in $\mathbb{C}_{m}$. That is, $U$ is an axially symmetric $s$-domain containing $K$. Moreover, the boundary  $\partial(U\cap \mathbb{C}_{m})$ is the union of finite number of continuously differentiable Jordan curves by the above argument.  The closure of $U$ follows from Equation (\ref{Equation: closureofOmegaS}) as, 
	\begin{equation*}
	 \overline{U} =\overline{\Omega}_{U_{m}} = \Omega_{\overline{U}_{m}} \subset \Omega_{W_{m}} = W.
	\end{equation*} 
	Hence the result.
\end{proof}
\begin{cor}
	Let $T \in \mathcal{B}(\mathcal{H})$ and let $W \subseteq \mathbb{H}$ be an axially symmetric $s$-domain contining the spherical spectrum $\sigma_{S}(T)$. Then there is a $T$-admissible domain $U$ such that $\overline{U} \subseteq W$.
	\begin{proof}
		Since $\sigma_{S}(T)$ is an axially symmetric compact subset of $\mathbb{H}$ and $W$ is an axially symmetric $s$-domain containing $\sigma_{S}(T)$, then by Lemma \ref{Lemma: Cauchydomain} there exists an axially symmetric $s$-domain $U$ containing the spherical spectrum $\sigma_{S}(T)$ such that $\overline{U} \subseteq W$. Equivalently, $U$ is a  $T$-admissible domain satisfying,  $\overline{U} \subseteq W$.
	\end{proof}
\end{cor}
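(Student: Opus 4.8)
The plan is to obtain $U$ as an immediate instance of Lemma~\ref{Lemma: Cauchydomain}, so the work reduces to checking that its hypotheses apply with the right choice of compact set and that its conclusion is literally the definition of a $T$-admissible domain.

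First I would set $K := \sigma_{S}(T)$ and record that $K$ is an axially symmetric compact subset of $\mathbb{H}$. Compactness is built into the definition of the spherical spectrum. For axial symmetry, observe that $\Delta_{q}(T) = T^{2} - 2\,\text{re}(q)\,T + |q|^{2}I$ depends on $q$ only through $\text{re}(q)$ and $|q|$, so invertibility of $\Delta_{q}(T)$ is a property of the whole equivalence class $[q]$ of Equation~(\ref{Equation: equivalenceclass}). Hence $\sigma_{S}(T) = \bigcup_{q \in \sigma_{S}(T)}[q]$, which by Definition~\ref{Definition: circularization} is the circularization of a conjugation-invariant subset of $\mathbb{C}$; that is, it is axially symmetric.

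Next, since $W$ is by hypothesis an axially symmetric $s$-domain containing $K = \sigma_{S}(T)$, Lemma~\ref{Lemma: Cauchydomain} applies with this $K$ and this $W$ and produces an axially symmetric $s$-domain $U$ with $\sigma_{S}(T) \subset U$, with $\overline{U} \subset W$, and with $\partial(U \cap \mathbb{C}_{m})$ a finite union of continuously differentiable Jordan curves for every $m \in \mathbb{S}$. Comparing this list of properties with the definition of a $T$-admissible open set in Definition~\ref{Definition: locallyregular}(1), every requirement is met, so $U$ is $T$-admissible and $\overline{U} \subseteq W$, which is the assertion.

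I do not expect any genuine obstacle at this step: all of the substantive geometry — that between an axially symmetric compact set and any axially symmetric $s$-domain enclosing it one can always interpose an axially symmetric $s$-domain whose slice boundaries are finite unions of continuously differentiable Jordan curves — is exactly the content of Lemma~\ref{Lemma: Cauchydomain}, and the corollary is just the translation of that statement into the language of $T$-admissible domains. The only point needing a moment's care is the axial-symmetry claim for $\sigma_{S}(T)$, which is the short computation with $\Delta_{q}(T)$ indicated above.
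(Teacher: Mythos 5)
Your proposal is correct and follows essentially the same route as the paper: both simply invoke Lemma \ref{Lemma: Cauchydomain} with $K = \sigma_{S}(T)$ and the given $W$, then observe that the resulting $U$ satisfies the definition of a $T$-admissible domain. Your additional verification that $\sigma_{S}(T)$ is axially symmetric (via the dependence of $\Delta_{q}(T)$ only on $\mathrm{re}(q)$ and $|q|$) is a detail the paper takes for granted, and it is a worthwhile inclusion.
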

\begin{thm}\label{Theorem: Riesztheorem}
	Let $T \in \mathcal{B}(\mathcal{H})$ and let $\sigma_{S}(T) = \sigma \cup \tau$, where $\sigma$ and $\tau$ are disjoint nonempty axially symmetric closed subsets of $\sigma_{S}(T)$. Then there exist a pair $\{\mathcal{M}_{\sigma}, \mathcal{M}_{\tau}\}$ of non-trivial invariant subspaces of $T$ such that 
	\begin{equation*}
	\sigma = \sigma_{S}(T|_{\mathcal{M}_{\sigma}}) \; \; \text{and}\; \; \tau = \sigma_{S}(T|_{\mathcal{M}_{\tau}}).
	\end{equation*}
	
\end{thm}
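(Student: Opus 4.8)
The plan is to mimic the classical Riesz decomposition argument using the quaternionic functional calculus developed above. Since $\sigma$ and $\tau$ are disjoint axially symmetric compact sets, I would first separate them by a pair of disjoint axially symmetric $s$-domains: choose axially symmetric open sets $W_\sigma \supseteq \sigma$ and $W_\tau \supseteq \tau$ with $\overline{W_\sigma} \cap \overline{W_\tau} = \emptyset$, each meeting $\mathbb{R}$ (this is where one must be slightly careful — an axially symmetric set containing a point $q$ with $\mathrm{re}(q)\neq 0$ need not meet $\mathbb{R}$, but since $\sigma_S(T)$ is compact and axially symmetric one can enlarge slightly along the real axis, or argue slicewise in $\mathbb{C}_m$ and circularize; in the worst case where, say, $\sigma\cap\mathbb{R}=\emptyset$, one forms the $s$-domain as a circularization of a union of small discs around $\sigma\cap\mathbb{C}_m$ exactly as in Lemma~\ref{Lemma: Cauchydomain}, noting the construction there produces a genuine $s$-domain). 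Then define the characteristic-type locally regular function $\chi_\sigma$ which equals $1$ on a neighbourhood of $\sigma$ and $0$ on a neighbourhood of $\tau$; this is both left and right $s$-regular (it is locally constant, hence its slice derivatives vanish), so $\chi_\sigma \in \mathcal{R}^L_{\sigma_S(T)} \cap \mathcal{R}^R_{\sigma_S(T)}$.

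Next I would set $P_\sigma := \chi_\sigma(T)$ via the functional calculus of Definition~\ref{Definition: quaternionicfunctionalcalculus}, and similarly $P_\tau := \chi_\tau(T)$ where $\chi_\tau = 1-\chi_\sigma$ near $\sigma_S(T)$. The key algebraic facts needed are: $\chi_\sigma^2 = \chi_\sigma$, $\chi_\sigma\chi_\tau = 0$, $\chi_\sigma + \chi_\tau = 1$ as locally regular functions near $\sigma_S(T)$, and that the functional calculus is multiplicative and unital on this algebra — these are standard for the quaternionic functional calculus (e.g.\ \cite[Proposition 4.11.1]{Colombo}) and are proved by the usual contour-manipulation/resolvent-equation argument using Theorem~\ref{Theorem: S-resolventequation}. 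Consequently $P_\sigma$ is an idempotent in $\mathcal{B}(\mathcal{H})$ commuting with $T$, $P_\tau = I - P_\sigma$, and $P_\sigma P_\tau = 0$. I then put $\mathcal{M}_\sigma := R(P_\sigma)$ and $\mathcal{M}_\tau := R(P_\tau) = N(P_\sigma)$; both are closed $T$-invariant subspaces (invariance because $TP_\sigma = P_\sigma T$), and $\mathcal{H} = \mathcal{M}_\sigma \oplus \mathcal{M}_\tau$ as right $\mathbb{H}$-modules.

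It remains to identify the spherical spectra of the restrictions: $\sigma_S(T|_{\mathcal{M}_\sigma}) = \sigma$ and $\sigma_S(T|_{\mathcal{M}_\tau}) = \tau$. For the inclusion $\sigma_S(T|_{\mathcal{M}_\sigma}) \subseteq \sigma$, I would use that on $\mathcal{M}_\sigma$ one can write the spherical resolvent $S_L^{-1}(s,T)|_{\mathcal{M}_\sigma}$ for all $s$ in a $T$-admissible domain around $\sigma$ that avoids $\tau$ — concretely, the function $s \mapsto \chi_\sigma$ times the resolvent kernel extends regularly across $\tau$, so $\Delta_q(T|_{\mathcal{M}_\sigma})$ is invertible for $q \notin \sigma$ by building its inverse through the functional calculus applied to the appropriate regular function (the slicewise reciprocal of $q^2 - 2\mathrm{re}(q)\cdot + |q|^2$, which is regular on a neighbourhood of $\sigma$ for $q\notin\sigma$). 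The reverse inclusion $\sigma \subseteq \sigma_S(T|_{\mathcal{M}_\sigma})$ follows from $\sigma_S(T) = \sigma_S(T|_{\mathcal{M}_\sigma}) \cup \sigma_S(T|_{\mathcal{M}_\tau})$ together with the already-established $\sigma_S(T|_{\mathcal{M}_\tau}) \subseteq \tau$ and disjointness of $\sigma,\tau$. Finally, non-triviality of $\mathcal{M}_\sigma$ and $\mathcal{M}_\tau$ is immediate: if $\mathcal{M}_\sigma = \{0\}$ then $P_\sigma = 0$, forcing $\sigma_S(T) = \sigma_S(T|_{\mathcal{M}_\tau}) \subseteq \tau$, contradicting $\sigma \neq \emptyset$; symmetrically for $\mathcal{M}_\tau$.

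The main obstacle I anticipate is not the idempotent construction (which is formal once multiplicativity of the functional calculus is granted) but the spectral identification $\sigma_S(T|_{\mathcal{M}_\sigma}) = \sigma$, and in particular verifying that the functional-calculus decomposition genuinely splits the spherical spectrum additively — i.e.\ proving $\sigma_S(T) = \sigma_S(T|_{\mathcal{M}_\sigma}) \sqcup \sigma_S(T|_{\mathcal{M}_\tau})$ with the two pieces landing in $\sigma$ and $\tau$ respectively. This requires care because the spherical spectrum is governed by invertibility of the second-order operator $\Delta_q(T)$ rather than of $qI - T$, so one must either reduce to the complex slice picture (restricting to $\mathcal{H}^{Ji}_+$ when $T$ is normal, but here $T$ is general) or argue directly that $S_L^{-1}(s,T)$ restricted to $\mathcal{M}_\sigma$ analytically continues to all $s$ outside $\sigma$, using the contour deformation and the regularity of $\chi_\sigma$ off $\tau$.
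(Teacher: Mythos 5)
Your proposal follows essentially the same route as the paper: Riesz projections $P_\sigma=\rchi_\sigma(T)$ built from the quaternionic functional calculus over separating axially symmetric $s$-domains, idempotency and commutation with $T$ via the spherical resolvent identities, and the spectral identification by constructing an explicit inverse of $\Delta_q(T)|_{\mathcal{M}_\sigma}$ from the regular function $t\mapsto(t^2-2\mathrm{re}(q)t+|q|^2)^{-1}$ for $q\notin\sigma$, with the reverse inclusion obtained from disjointness. The only difference is presentational — you cite multiplicativity of the calculus where the paper re-derives $P_\sigma^2=P_\sigma$ directly from the $S$-resolvent equation — and you correctly flag the same delicate points (the $s$-domain construction and the spectral splitting) that the paper handles in Lemma~\ref{Lemma: Cauchydomain} and Step~IV.
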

\begin{proof} Let $m \in \mathbb{S}$. From hypothesis, it is clear that  $\sigma \cap \mathbb{C}_{m}$ and $\tau\cap\mathbb{C}_{m}$ are disjoint non-empty compact subsets of the Hausdorff space $\mathbb{C}_{m}$. Then there is a pair of disjoint open sets, say $\mathcal{O}^{(m)}_{\sigma}$ and $\mathcal{O}^{(m)}_{\tau}$ of $\mathbb{C}_{m}$ such that $\sigma\cap \mathbb{C}_{m} \subset \mathcal{O}^{(m)}_{\sigma}$ and $\tau\cap \mathbb{C}_{m}\subset \mathcal{O}^{(m)}_{\tau} $. By axially symmetric propoerty of $\sigma$ and $\tau$, we can write
	\begin{equation*}
	\sigma = \Omega_{\sigma \cap \mathbb{C}_{m}} \subset \Omega_{\mathcal{O}^{(m)}_{\sigma}}\; \text{and }\; \tau = \Omega_{\tau\cap \mathbb{C}_{m}}\subseteq \Omega_{\mathcal{O}^{(m)}_{\tau}}.
	\end{equation*}   
	Note that $\Omega_{\sigma\cap \mathbb{C}_{m}}$ and $\Omega_{\tau \cap \mathbb{C}_{m}}$ are nonempty disjoint $s$-domains in $\mathbb{H}$. By  Lemma \ref{Lemma: Cauchydomain}, there exist a pair of axially symmetric $s$-domains, denote them by $U_{\sigma}$ and $U_{\tau}$, containing compact sets $\sigma$ and $\tau$ respectively. Also, the boundaries $\partial(U_{\sigma}\cap \mathbb{C}_{m})$ and $\partial(U_{\tau} \cap \mathbb{C}_{m})$ are the union of finite number of continuously differentiable Jordan curves satisfying,
	\begin{equation*}
	    \overline{U}_{\sigma} \subseteq \Omega_{\mathcal{O}_{\sigma}^{(m)}} \; \text{and}\; \overline{U}_{\tau} \subseteq \Omega_{\mathcal{O}_{\tau}^{(m)}}. 
	\end{equation*}
	
	Now we define quaternionic operators corresponding to  $\sigma$ and $\tau$ as follows:
	
	\begin{equation}\label{Equation: Psigma}
	    P_{\sigma} = \frac{1}{2\pi} \int\limits_{\partial{(U_{\sigma} \cap \mathbb{C}_{m})}}  ds_{m}\; S_{R}^{-1}(s,T) 
	\end{equation}
{and}
\begin{equation} \label{Equation: Ptau}
    	P_{\tau} = \frac{1}{2\pi} \int\limits_{\partial{(U_{\tau} \cap \mathbb{C}_{m})}}  ds_{m} \; S_{R}^{-1}(s,T),
\end{equation}
	where $ds_{m} = -ds\cdot m$.  
	Now we prove the theorem in four steps. 
	
	\vspace{0.3cm}
	\noindent {\bf Step I:} $P_{\sigma}^{2} = P_{\sigma}$ and $P_{\tau}^{2} = P_{\tau}$.
	\vspace{0.2cm}

	As $U_{\sigma}$ is axially symmetric $s$-domain containing the compact set $\sigma$, then by applying Lemma \ref{Lemma: Cauchydomain}, there exists another axially symmetric $s$-domian $U_{\sigma}^{\prime}$ containing $\sigma$ with the boundary  $\partial{(U_{\sigma}^{\prime}\cap \mathbb{C}_{m})}$ is the union of a finite number of continously differentiable Jordan curves such that $\overline{U_{\sigma}^{\prime}} \subset U_{\sigma}$. If we define $\rchi_{\sigma} \colon\sigma_{s}(T) \to \mathbb{H}$ by
	\[  \rchi_{\sigma}(q) = \left\{ \begin{array}{cc}
	     1, & \mbox{whenever $q \in \sigma$}  \\
	     0, & \mbox{otherwise},
	\end{array}\right.\]
	   then clearly $\rchi_{\sigma} \in \mathcal{R}^{L}_{\sigma_{S}(T)} \cap \mathcal{R}^{L}_{\sigma_{S}(T)}$ and $P_{\sigma} = \rchi_{\sigma}(T)$ by the quaternionic functional calculus.
	Since the integral is independent of the choice of $s$-domain and also using the fact that $\rchi_{\sigma}$ is a locally left $s$-regular function,  we can express it by
\begin{equation}\label{Equation: Psigmaforprime}
\rchi_{\sigma}(T) =  \frac{1}{2\pi}\int\limits_{\partial{(U_{\sigma}^{\prime}\cap \mathbb{C}_{m}})}  S_{L}^{-1}(p,T)\; dp_{m} = P_{\sigma},
\end{equation}
where $dp_{m} = -dp \cdot m$. From Equations (\ref{Equation: Psigma}), (\ref{Equation: Psigmaforprime}) and Theorem \ref{Theorem: S-resolventequation}, we compute $P_{\sigma}^{2}$ as follows, 
\begin{align*}
P_{\sigma}^{2} &= \Big(\frac{1}{2\pi}\int\limits_{\partial{(U_{\sigma}\cap \mathbb{C}_{m})}} ds_{m}\; S_{R}^{-1}(s, T) \Big) \cdot \Big(\frac{1}{2\pi}\int\limits_{\partial{(U_{\sigma}^{\prime}\cap \mathbb{C}_{m})}}  S_{L}^{-1}(p, T)\; dp_{m} \Big) \\
&=\frac{1}{4\pi^{2}}\int\limits_{\partial{(U\cap \mathbb{C}_{m}})} ds_{m} \int\limits_{\partial{(U_{\sigma}^{\prime}\cap \mathbb{C}_{m})}}S_{R}^{-1}(s,T)S_{L}^{-1}(p,T)\;dp_{m}\\
&= \frac{1}{4\pi^{2}}\int\limits_{\partial{(U_{\sigma}\cap \mathbb{C}_{m})}} ds_{m} \int\limits_{\partial{(U_{\sigma}^{\prime}\cap \mathbb{C}_{m})}} \Big( S_{R}^{-1}(s,T)-S_{L}^{-1}(p,T)\Big) p\; (p^{2}-2re(s)p+|s|^{2})^{-1} dp_{m}\\
&\;\;\;\; - \frac{1}{4\pi^{2}}\int\limits_{\partial{(U_{\sigma}\cap \mathbb{C}_{m})}} ds_{m} \int\limits_{\partial{(U_{\sigma}^{\prime}\cap \mathbb{C}_{m})}} \overline{s}\Big( S_{R}^{-1}(s,T)-S_{L}^{-1}(p,T)\Big)(p^{2}-2re(s)p+|s|^{2})^{-1} dp_{m}.
\end{align*}
Now we pause our computation for a while. Let us observe the following arguments. For every $s \in \partial{(U_{\sigma}\cap \mathbb{C}_{m})}$, if we define a map $g_{s} \colon U_{\sigma}^{\prime} \to \mathbb{H}$ by 
\begin{equation*}
    g_{s}(q) = (q^{2} - 2re(s)q + |s|^{2})^{-1},\; \text{for all}\; q \in U_{\sigma}^{\prime}
\end{equation*}
then 
\begin{align*}
\frac{\partial{}}{\partial{x}}&\  g_{s}(x+my) +  \frac{\partial{}}{\partial{y}}\ g_{s}(x+my)m\\
& = -\big[(x+my)^{2} - 2re(s)(x+my)+|s|^{2}\big]^{-2} \big(2x+2my-2re(s)\big)\\
& \hspace{0.6cm} - \big[(x+my)^{2} - 2re(s)(x+my)+|s|^{2}\big]^{-2} \big(2xm-2y-2re(s)m\big)m\\
&= -\big[(x+my)^{2} - 2re(s)(x+my)+|s|^{2}\big]^{-2} \big(2x+2my-2re(s)\big)\\
& \hspace{0.6cm} + \big[(x+my)^{2} - 2re(s)(x+my)+|s|^{2}\big]^{-2} \big(2x+2my-2re(s)\big)\\
&= 0,
\end{align*}
for every $x + my \in U_{\sigma}^{\prime} \cap \mathbb{C}_{m}$. This shows that $g_{s} \in \mathcal{R}^{R}(U_{\sigma}^{\prime})$, for every $s \in \partial{(U_{\sigma}\cap \mathbb{C}_{m})}$. Similarly, if we define $h_{s}(q) = q g_{s}(q)$ for all $q \in  U_{\sigma}^{\prime}$, then $h_{s} \in\mathcal{R}^{R}(U_{\sigma}^{\prime})$, for every $s \in \partial{(U_{\sigma}\cap \mathbb{C}_{m})} $. Thus by residue theorem, we see that  
\begin{equation*}
\int\limits_{\partial{(U^{\prime}_{\sigma}\cap \mathbb{C}_{m})}} g_{s}(p) dp_{m} = 0 \;\; ;\; \; \int\limits_{\partial{(U^{\prime}_{\sigma}\cap \mathbb{C}_{m})}} h_{s}(p) dp_{m} = 0.
\end{equation*}
This implies that 
\begin{equation}\label{Equation: hp}
    \frac{1}{4\pi^{2}} \int\limits_{\partial(U_{\sigma}\cap \mathbb{C}_{m})} ds_{m}  S_{R}^{-1}(s,T)\int\limits_{\partial(U_{\sigma}^{\prime}\cap \mathbb{C}_{m})} h_{s}(p) dp_m = 0
\end{equation}
and
\begin{equation}\label{Equation: gp}
    \frac{1}{4\pi^{2}} \int\limits_{\partial(U_{\sigma}\cap \mathbb{C}_{m})} ds_{m} \overline{s} S_{R}^{-1}(s,T)\int\limits_{\partial(U_{\sigma}^{\prime}\cap \mathbb{C}_{m})} h_{s}(p) dp_m = 0.
\end{equation}
Moreover,  by \cite[Lemma 3.18]{Alpay-R}, we have 
\begin{equation}\label{Equation: Finally}
    \frac{1}{2\pi} \int\limits_{\partial(U_{\sigma}\cap \mathbb{C}_{m})} ds_{m} \big[ \overline{s}S_{L}^{-1}(p,T) - S_{L}^{-1}(p, T)p \big] g_{s}(p)  = S_{L}^{-1}(s,T)
\end{equation}
since $S_{L}^{-1}(s,T) \in \mathcal{B}(\mathcal{H})$. Now we resume our computation of $P_{\sigma}^{2}$. From Equations (\ref{Equation: hp}), (\ref{Equation: gp}) and above arguments, we get that 
\begin{align*}
P_{\sigma}^{2} &= -\frac{1}{4\pi^{2}}\int\limits_{\partial{(U_{\sigma}\cap \mathbb{C}_{m})}}ds_{m} \int\limits_{\partial{(U^{\prime}_{\sigma}\cap \mathbb{C}_{m})}} S_{L}^{-1}(p,T) h_{s}(p)dp_{m}\\
& \;\;\;\;\;\;\;\;\;\;+\frac{1}{4\pi^{2}}\int\limits_{\partial{(U_{\sigma}\cap \mathbb{C}_{m})}}ds_{m} \overline{s} \int\limits_{\partial{(U^{\prime}_{\sigma}\cap \mathbb{C}_{m})}} S_{L}^{-1}(p,T) g_{s}(p)dp_{m}\\
&=\frac{1}{4\pi^{2}}\int\limits_{\partial{(U_{\sigma}\cap \mathbb{C}_{m})}}ds_{m} \int\limits_{\partial{(U^{\prime}_{\sigma}\cap \mathbb{C}_{m})}} \big[\overline{s}S_{L}^{-1}(p,T) - S_{L}^{-1}(p,T)p\big]g_{s}(p) dp_{m} \\
&= \frac{1}{2\pi}\int\limits_{\partial{(U^{\prime}_{\sigma}\cap \mathbb{C}_{m})}} \Big(\frac{1}{2\pi}\int\limits_{\partial{(U_{\sigma}\cap \mathbb{C}_{m})}}ds_{m}\big[\overline{s}S_{L}^{-1}(p,T) - S_{L}^{-1}(p,T)p\big]g_{s}(p) \Big)\; dp_{m}\\
&= \frac{1}{2\pi}\int\limits_{\partial{(U^{\prime}_{\sigma}\cap \mathbb{C}_{m})}}S_{L}^{-1}(p, T)\; dp_{m}, \; \; \;  \text{by Equation}\  (\ref{Equation: Finally})\\
&= P_{\sigma}.
\end{align*}
Further, by Remark \ref{Remark: adjoint} and Equation (\ref{Equation: Psigmaforprime}), the operator $P_{\sigma}$ is self-adjoint. Similarly the result holds true for $P_{\tau}$. Therefore, $P_{\sigma}$ and $P_{\tau}$ are orthogonal projections in $\mathcal{H}$, we call them as {\it Riesz projections}.

	\vspace{0.3cm}
	
	\noindent {\bf Step II:} Let $\mathcal{M}_{\sigma} := R(P_{\sigma}) $ and $\mathcal{M}_{\tau} = R(P_{\tau})$. Then $\mathcal{H} = \mathcal{M}_{\sigma} \oplus \mathcal{M}_{\tau}$.
	\vspace{0.2cm}
	
Let us define $U:= U_{\sigma} \cup U_{\tau}$. Then $U$ is an axially symmetric $s$-domain containing $\sigma_{S}(T)$ such that $\partial{(U\cap \mathbb{C}_{m})}$ is the union of a finite number of continuously differentiable Jordan curves. Equivalently, $U$ is a $T$-admissible domain. By the quaternionic functional calculus, we have
\begin{align*}
    P_{\sigma} + P_{\tau} 
    &= \int\limits_{\partial{(U_{\sigma}\cap \mathbb{C}_{m})}} d_{m}\ S_{R}^{-1}(s, T) +\int\limits_{\partial{(U_{\tau}\cap \mathbb{C}_{m})}} ds_{m}\ S_{R}^{-1}(s, T) \\
    &= \int\limits_{\partial{(U\cap \mathbb{C}_{m})}} ds_{m}\ S_{R}^{-1}(s, T) \\
    &= I,
\end{align*}
	where $I \in \mathcal{B}(\mathcal{H})$ is the identity operator. It follows that
	\begin{equation*}
	P_{\sigma}\cdot P_{\tau} = P_{\sigma}(I-P_{\sigma}) = P_{\sigma} - P_{\sigma}^{2} = 0.
	\end{equation*} 
	If $x \in \mathcal{H}$, then $x$ is uniquely expressed as, $x = P_{\sigma}(x) + P_{\tau}(x)$. This implies that  
	\begin{equation*}
	    \mathcal{H} = \mathcal{M}_{\sigma} \oplus \mathcal{M}_{\tau}.
	\end{equation*} 	
	\noindent {\bf Step III:} We prove that $M_{\sigma}$ and $M_{\tau}$ are invariant subspaces of $T$. 
	\vspace{0.3cm}
	
	 It is enough to show that both $P_{\sigma}$ and $P_{\tau}$ commute with $T$.  For every $x, y \in \mathcal{H}$, we compute that
	\begin{align*}
	\big\langle x, TP_{\sigma}(y)\big\rangle = \big\langle T^{\ast}(x), P_{\sigma}(y)\big\rangle &= \Big\langle T^{\ast}x,\; \Big(\frac{1}{2\pi}\int\limits_{\partial{(U_{\sigma}^{\prime}\cap \mathbb{C}_{m})}} S_{L}^{-1}(p,T)\ dp_{m} \Big) y\Big\rangle\\
	&=\frac{1}{2\pi}\int\limits_{\partial{(U_{\sigma}^{\prime}\cap \mathbb{C}_{m})}}\big\langle T^{\ast}x,\;  S_{L}^{-1}(p, T)y \big\rangle\;  dp_{m}\\
	&= \frac{1}{2\pi}\int\limits_{\partial{(U_{\sigma}^{\prime}\cap \mathbb{C}_{m})}}\big\langle x,\; TS_{L}^{-1}(p, T)y \big\rangle \; dp_{m}\\
	&=\Big\langle x,\; \Big(\frac{1}{2\pi}\int\limits_{\partial{(U_{\sigma}^{\prime}\cap \mathbb{C}_{m})}} TS_{L}^{-1}(p,T)\ dp_{m} \Big) y\Big\rangle.
	\end{align*}
This implies that 
\begin{align*}
TP_{\sigma} &= \int\limits_{\partial{(U_{\sigma}^{\prime}\cap \mathbb{C}_{m})}} TS_{L}^{-1}(p, T) \; dp_{m}, \;\; \text{where}\; dp_{m}= -dp\cdot m\\
&= \int\limits_{\partial{(U_{\sigma}^{\prime}\cap \mathbb{C}_{m})}} \big(S_{L}^{-1}(p, T)p-I\big) \; dp_{m} ,\; \; \text{by Equation}\; (\ref{Equation: S_{L}})\\
&= \int\limits_{\partial{(U_{\sigma}^{\prime}\cap \mathbb{C}_{m})}} S_{L}^{-1}(p,T)dp_{m}\; p, \; \; \; \text{since}\; \int\limits_{\partial{(U_{\sigma}\cap \mathbb{C}_{m})}} dp_{m} = 0.
\end{align*}
Similarly, we compute $P_{\sigma}T$ as, 
	\begin{align*}
	\big\langle x, P_{\sigma}(Ty)\big\rangle  &= \Big\langle x,\; \Big(\frac{1}{2\pi}\int\limits_{\partial{(U_{\sigma}\cap \mathbb{C}_{m})}} ds_{m}\; S_{R}^{-1}(s,T) \Big) (Ty)\Big\rangle\\
	&=\frac{1}{2\pi}\int\limits_{\partial{(U_{\sigma}\cap \mathbb{C}_{m})}}ds_{m}\; \big\langle x,\;  S_{R}^{-1}(s, T)Ty \big\rangle\\
	&= \frac{1}{2\pi}\int\limits_{\partial{(U_{\sigma}\cap \mathbb{C}_{m})}}ds_{m}\; \big\langle x,\; S_{R}^{-1}(s, T)Ty \big\rangle \\
	&=\Big\langle x,\; \Big(\frac{1}{2\pi}\int\limits_{\partial{(U_{\sigma}\cap \mathbb{C}_{m})}} ds_{m}\; S_{R}^{-1}(s,T)T  \Big) y\Big\rangle.
	\end{align*}
	Thus

	\begin{align*}
	P_{\sigma}T &= \int\limits_{\partial{(U_{\sigma}\cap \mathbb{C}_{m})}} ds_{m}\; S_{R}^{-1}(s, T)T, \; \; \text{where}\; ds_{m} = -ds\cdot m\\
	&= \int\limits_{\partial{(U_{\sigma}\cap \mathbb{C}_{m})}} ds_{m}\; \big(s S_{R}^{-1}(s, T)- I\big), \; \; \; \text{by Equation}\; (\ref{Equation: S_{R}})\\
	&= \int\limits_{\partial{(U_{\sigma}\cap \mathbb{C}_{m})}} s\;  ds_{m}\; S_{R}^{-1}(s, T),\; \; \; \text{since}\; \int\limits_{\partial{(U_{\sigma}\cap \mathbb{C}_{m})}} ds_{m} = 0.
	\end{align*}
	This shows that  $TP_{\sigma} = P_{\sigma}T$. Therefore,
	\begin{equation*}
	TP_{\tau} = T(I-P_{\sigma}) = (I-P_{\sigma})T = P_{\tau}T.
	\end{equation*} 
	As a result, we conclude that both $\mathcal{M}_{\sigma}$ and $\mathcal{M}_{\tau}$ are invariant subspaces of $T$.

	\vspace{0.3cm}
	\noindent {\bf Step IV:}  Finally, we show that $\sigma= \sigma_{S}(T|_{\mathcal{M}_{\sigma}})$ and $\tau = \sigma_{S}(T|_{\mathcal{M}_{\tau}})$. 
	\vspace{0.2cm}
	
	Suppose that $q \notin \sigma$, then $[q] \notin {\sigma}$ since $\sigma $ is axially symmetric. With out loss of generality, we assume that there is an axially symmetric $s$-domain   $U_{\sigma}$ containing $\sigma$ such that   $\partial{(U_{\sigma}\cap \mathbb{C}_{m})}$ is the union of a finite number of continuously differentiable Jordan curves for every $m \in \mathbb{S}$. Let us fix $m \in \mathbb{S}$. Define the operator 
	\begin{equation*}
	\mathcal{Q}_{\sigma}^{(q)}: = \frac{1}{2\pi} \int\limits_{\partial{(U_{\sigma}\cap \mathbb{C}_{m})}} S_{L}^{-1}(t, T)\; dt_{m} \big(t^{2}- 2 re(q)t+|q|^{2}\big)^{-1},
	\end{equation*}
	where $dt_{m} = -dt\cdot m$. We claim that $\mathcal{M}_{\sigma}$ is invariant subspace of $\mathcal{Q}_{\sigma}^{(q)}$. For any $p \in \mathbb{C}_{m}$, if we define a map $\xi_{p}(t) = \big(t^{2}- 2 re(q)t+|q|^{2}\big)^{-1} S_{L}^{-1}(p, t)$, for all $t \in U_{\sigma}$. It is clear that 
	\begin{equation*}
	    \big(t^{2}- 2 re(q)t+|q|^{2}\big)^{-1} \in \mathbb{C}_{m}, \; \; \text{whenever}\;\; t \in \mathbb{C}_{m} 
	\end{equation*} 
	and $S_{L}^{-1}(p,t)$ is a left $s$-regular function in variable $t$. Thus $\xi_{p} \in R^{L}(U_{\sigma})$ by  \cite[Proposition 4.11.5]{Colombo}. So, by the quaternionic functional calculus, we deduce that
	\begin{equation} \label{Equation: xip}
	    \xi_{p}(T) = \mathcal{Q}^{(q)}_{\sigma} S_{L}^{-1}(p, T) =   \frac{1}{2\pi} \int\limits_{\partial(U_{\sigma}\cap \mathbb{C}_{m})} S_{L}^{-1}(t, T)\ dt_{m} \big(t^{2}- 2 re(q)t+|q|^{2}\big)^{-1} S_{L}^{-1}(p, t).
	\end{equation}
	This implies the following:
	\begin{align*}
	    \mathcal{Q}_{\sigma}^{(q)}P_{\sigma} &= \frac{1}{2\pi} \int\limits_{\partial{(U_{\sigma}^{\prime}\cap \mathbb{C}_{m})}} \mathcal{Q}^{(q)}_{\sigma} S_{L}^{-1}(p, T) dp_{m}\\
	    &= \frac{1}{4\pi^{2}} \int\limits_{\partial{(U_{\sigma}^{\prime}\cap \mathbb{C}_{m})}} \int\limits_{\partial(U_{\sigma}\cap \mathbb{C}_{m})} S_{L}^{-1}(t, T)\ dt_{m} \big(t^{2}- 2 re(q)t+|q|^{2}\big)^{-1} S_{L}^{-1}(p, t) dp_{m},\\
	    &\hspace{9cm}\; \text{by Equation}\ (\ref{Equation: xip})\\
	    &= \frac{1}{2\pi} \int\limits_{\partial{(U\cap \mathbb{C}_{m})}} S_{L}^{-1}(t, T)\ dt_{m} \big(t^{2}- 2 re(q)t+|q|^{2}\big)^{-1} \Big(\frac{1}{2\pi} \int\limits_{\partial{(U_{\sigma}^{\prime}\cap \mathbb{C}_{m})}}S_{L}^{-1}(p, t) dp_{m}\Big)\\
	    &= \frac{1}{2\pi} \int\limits_{\partial{(U\cap \mathbb{C}_{m})}} S_{L}^{-1}(t, T)\ dt_{m} \big(t^{2}- 2 re(q)t+|q|^{2}\big)^{-1}, \\
	    & \hspace{5cm}\;\text{since}\; \frac{1}{2\pi} \int\limits_{\partial{(U_{\sigma}\cap \mathbb{C}_{m})}} S_{L}^{-1}(s, t)\; ds_{m} = 1\\
	    &= \mathcal{Q}_{\sigma}^{(q)}.
	\end{align*}
	 Similarly, $P_{\sigma}\mathcal{Q}_{\sigma}^{(q)} = \mathcal{Q}_{\sigma}^{(q)}$. This means $\mathcal{Q}_{\sigma}^{(q)}|_{\mathcal{M}_{\sigma}} \in \mathcal{B}(\mathcal{M}_{\sigma})$. Next, we show that $\Delta_{q}(T)|_{\mathcal{M}_{\sigma}}$ is invertible. For this, let us define $\varphi_{q}(t) = t^{2}-2re(q)t+|q|^{2}$. Then $\varphi_{q}$ is  locally $s$-regular function on $\sigma_{S}(T)$. Moreover, by following similar arguments as in Step III, we express that
	 \begin{equation}\label{Equation: Deltap}
	     \Delta_{q}(T)|_{\mathcal{M}_{\sigma}} P_{\sigma} = \varphi_{q}(T) P_{\sigma} =  \frac{1}{2\pi} \int\limits_{\partial{(U_{\sigma}^{\prime}\cap \mathbb{C}_{m})}} S_{L}^{-1}(p, T)\ dp_{m} (p^{2}- 2 re(q)p+|q|^{2}),
	 \end{equation}
	where $dp_{m} = -dp\cdot m$.  Also we know that $\varphi_{q}(p)S_{L}^{-1}(t,p)$ is left $s$-regular function in the variable $p$, for every $t$. By \cite[Proposition 4.11.5]{Colombo}, it follows that
	\begin{equation}\label{Equation: Also}
	    \Delta_{q}(T)|_{\mathcal{M}_{\sigma}}S_{L}^{-1}(t,T) = \frac{1}{2\pi} \int\limits_{\partial(U_{\sigma}^{\prime}\cap \mathbb{C}_{m})} S_{L}^{-1}(p,T)\ dp_{m} \ \varphi_{q}(p) S_{L}^{-1}(t,p).
	\end{equation}
	From Equations (\ref{Equation: Deltap}), (\ref{Equation: Also}) we compute $\Delta_{q}(T)|_{\mathcal{M}_{\sigma}}Q_{\sigma}^{(q)}$ as follows:
	\begin{align*}
	&\Delta_{q}(T)|_{\mathcal{M}_{\sigma}} \mathcal{Q}_{\sigma}^{(q)} \\
	&= \frac{1}{2\pi} \int\limits_{\partial{(U_{\sigma}\cap \mathbb{C}_{m})}} \Delta_{q}(T)|_{\mathcal{M}_{\sigma}}S_{L}^{-1}(t,T)\; dt_{m}\; \big(t^{2} - 2re(q)t+|q|^{2}\big)^{-1}\\
	&=\frac{1}{4\pi^{2}} \int\limits_{\partial{(U_{\sigma}\cap \mathbb{C}_{m})}} \int\limits_{\partial{(U_{\sigma}^{\prime}\cap \mathbb{C}_{m})}} S_{L}^{-1}(p,T)\;  dp_{m}\; \varphi_{q}(p) S_{L}^{-1}(t,p)\; dt_{m} \big(t^{2} - 2re(q)t)+|q|^{2}\big)^{-1}\\
	&= \frac{1}{2\pi} \int\limits_{\partial(U^{\prime}_{\sigma}\cap \mathbb{C}_{m})} S_{L}^{-1}(p,T)\;  dp_{m}\; \varphi_{q}(p) \Big ( \frac{1}{2\pi}\int\limits_{\partial(U_{\sigma \cap \mathbb{C}_{m}})} S_{L}^{-1}(t,p)\; dt_{m} \big(t^{2} - 2re(q)t)+|q|^{2}\big)^{-1} \Big)\\
	&= \frac{1}{2\pi} \int\limits_{\partial(U_{\sigma}^{\prime} \cap \mathbb{C}_{m})} S_{L}^{-1}(p,T)\;  dp_{m}\; \varphi_{q}(p) \big(p^{2} - 2re(q)p)+|q|^{2}\big)^{-1}.
	\end{align*}
	Since $\varphi_{q}(p)  \big(p^{2} - 2re(q)p+|q|^{2}\big)^{-1} = 1$, it follows that $\Delta_{q}(T)|_{\mathcal{M}_{\sigma}} \mathcal{Q}_{\sigma}^{(q)} = P_{\sigma}$. 
Similarly, one can show that $\mathcal{Q}_{\sigma}^{(q)}\Delta_{q}(T)|_{\mathcal{M}_{\sigma}} = P_{\sigma}$. In other words, for every $x \in \mathcal{M}_{\sigma}$, we see that 
	\begin{equation*}
	 \Delta_{q}(T|_{\mathcal{M}_{\sigma}}) \mathcal{Q}_{\sigma}^{(q)}x =\mathcal{Q}_{\sigma}^{(q)} \Delta_{p}(T|_{\mathcal{M}_{\sigma}}) x = x, \; \text{for all}\; x \in \mathcal{M}_{\sigma}.
	\end{equation*} 
	Thus $\Delta_{q}(T|_{\mathcal{M}_{\sigma}})$ is invertible and hence $q \in \rho_{S}(T|_{\mathcal{M_{\sigma}}})$. It follows that  
	\begin{equation}\label{Equation: contained}
	    \sigma_{S}(T|_{\mathcal{M}_{\sigma}}) \subseteq \sigma.
	\end{equation} 
	By the similar arguments, we achieve that
	\begin{equation}\label{Equation: contained2}
	    \sigma_{S}(T|_{\mathcal{M}_{\tau}}) \subseteq \tau.
	\end{equation}
	
	Now we prove reverse inclusions. Suppose that $q \notin \sigma_{S}(T|_{M_{\sigma}})\cup \sigma_{S}(T|_{M_{\tau}})$. Then both operators $\Delta_{q}(T|_{\mathcal{M}_{\sigma}})$ and $\Delta_{q}(T|_{\mathcal{M}_{\tau}})$ are invertible. Hence $\Delta_{q}(T) \in \mathcal{B}(\mathcal{H})$ is invertible since $\mathcal{H} = \mathcal{M}_{\sigma} \oplus \mathcal{M}_{\tau}$. Equivalently,  $q \notin \sigma_{S}(T)$. This shows that
	 \begin{equation*}
	\sigma_{S}(T) \subseteq \sigma_{S}(T|_{M_{\sigma}})\cup \sigma_{S}(T|_{M_{\tau}}) \subseteq \sigma \cup \tau= \sigma_{S}(T).
	\end{equation*}
	Therefore by Equation (\ref{Equation: contained}), (\ref{Equation: contained2}) and using the fact that $\sigma$ and $\tau$ are disjoint, we conclude that
	\begin{equation*}
	\sigma_{S}(T|_{M_{\sigma}})= \sigma\; \text{and}\; \sigma_{S}(T|_{M_{\tau}})=\tau. 
	\end{equation*}
	Hence the result.
\end{proof}


\begin{cor} Let $T \in \mathcal{B}(\mathcal{H})$. If the spherical spectrum $\sigma_{S}(T)$ is disconnected by a pair of disjoint nonempty axially symmetric closed subsets, then  $T$ is strongly reducible. 
\end{cor}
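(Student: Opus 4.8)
The plan is to read the corollary off directly from the Riesz decomposition theorem just proved, observing that the Riesz projection $P_\sigma$ constructed there is simultaneously an orthogonal projection and an element of the commutant of $T$, and that an orthogonal projection is in particular an idempotent. So it suffices to produce a nontrivial idempotent commuting with $T$, and $P_\sigma$ will be that idempotent.

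First I would invoke the hypothesis to write $\sigma_S(T) = \sigma \cup \tau$ with $\sigma$ and $\tau$ disjoint nonempty axially symmetric closed subsets, and then apply Theorem \ref{Theorem: Riesztheorem}. This supplies the decomposition $\mathcal{H} = \mathcal{M}_\sigma \oplus \mathcal{M}_\tau$ with $\mathcal{M}_\sigma = R(P_\sigma)$ and $\mathcal{M}_\tau = R(P_\tau)$, where $P_\sigma$ and $P_\tau$ are the Riesz projections of Equations (\ref{Equation: Psigma}) and (\ref{Equation: Ptau}). From Step I of the proof of that theorem, $P_\sigma$ is self-adjoint and satisfies $P_\sigma^2 = P_\sigma$; from Step II, $P_\sigma + P_\tau = I$; from Step III, $TP_\sigma = P_\sigma T$; and from Step IV, $\sigma_S(T|_{\mathcal{M}_\sigma}) = \sigma$ and $\sigma_S(T|_{\mathcal{M}_\tau}) = \tau$.

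Next I would verify that $P_\sigma$ is nontrivial. The spherical spectrum of a bounded quaternionic operator acting on a nonzero space is always nonempty, whereas on the zero space every operator $\Delta_q$ is trivially invertible, so its spherical spectrum is empty. Hence the identity $\sigma_S(T|_{\mathcal{M}_\sigma}) = \sigma \neq \emptyset$ forces $\mathcal{M}_\sigma \neq \{0\}$, i.e. $P_\sigma \neq 0$; symmetrically $\tau \neq \emptyset$ gives $\mathcal{M}_\tau \neq \{0\}$, and since $P_\sigma = I - P_\tau$ this yields $P_\sigma \neq I$. Thus $P_\sigma$ is a nontrivial orthogonal projection commuting with $T$, which already shows $T$ is reducible; being an idempotent, $P_\sigma$ also witnesses, via Definition \ref{Definition: stronglyirreducible}, that $T$ is strongly reducible.

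I do not expect a real obstacle: essentially all the work is carried by Theorem \ref{Theorem: Riesztheorem}. The only point that deserves a line of justification is the nonemptiness of the spherical spectrum on a nonzero invariant subspace, which is what guarantees that neither Riesz projection degenerates to $0$ or $I$; everything else is the elementary remark that orthogonal projections are idempotents, together with the observation made after Definition \ref{Definition: stronglyirreducible} that reducibility implies strong reducibility.
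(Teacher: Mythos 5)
Your proposal is correct and follows essentially the same route as the paper: both apply Theorem \ref{Theorem: Riesztheorem} and observe that the Riesz projection $P_{\sigma}$ is a nontrivial idempotent commuting with $T$, which by Definition \ref{Definition: stronglyirreducible} makes $T$ strongly reducible. Your extra remark justifying nontriviality of $P_{\sigma}$ via the nonemptiness of $\sigma_{S}(T|_{\mathcal{M}_{\sigma}})=\sigma$ is a small but welcome addition that the paper leaves implicit.
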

\begin{proof}
	From the hypothesis, assume that there is a pair $\{\sigma, \tau\}$  of disjoint nonempty axially symmetric closed subsets of $\sigma_{S}(T)$ satisfying,
	\begin{equation*}
	\sigma_{S}(T) = \sigma \cup \tau. 
	\end{equation*} 
	 Then by Theorem \ref{Theorem: Riesztheorem}, there exist a pair of nontrivial mutually orthogonal invariant subspaces $M_{\sigma}$ and $M_{\tau}$ of $T$ such that 
	 \begin{equation*}
	 \sigma_{S}(T|_{M_{\sigma}})= \sigma\; \; \text{and}\;\; \sigma_{S}(T|_{M_{\tau}}) = \tau.
	 \end{equation*} 
	 Equivalently, $T$ commutes with the corresponding projections $P_{\sigma}$ and $P_{\tau}$ as shown in  step III of Theorem \ref{Theorem: Riesztheorem}.  This implies that $T$ is strongly reducible. 
\end{proof}
\subsection*{Acknowledgments}
The author thanks NBHM (National Board for Higher Mathematics, India) for financial support with ref No. 0204/66/2017/R\&D-II/15350, and Indian Statistical Institute Bangalore for providing necessary facilities to carry out this work. 

The author would like to express his sincere gratitude to Prof. B.V. Rajarama Bhat for helpful suggestions and constant support.




\begin{thebibliography}{widestlabel}
	\bibitem{Adler}S. L. Adler, {\it Quaternionic quantum mechanics and quantum fields},  Oxford University Press, New York, 1995.
	\bibitem{Alpay-R}D. Alpay, F. Colombo, \ Gantner. J \ and I. Sabadini et al., {\em A new resolvent equation for the $S$-functional calculus}, J. Geom. Anal. {\bf 25} (2015),
	\bibitem{Colombo} F. Colombo, I. Sabadini\ and\ D. C. Struppa, {\it Noncommutative functional calculus}, Progress in Mathematics, 289, Birkh\"{a}user/Springer Basel AG, Basel, 2011.
	\bibitem{Conway}J. B. Conway, {\it A course in functional analysis}, Graduate Texts in Mathematics, 96, Springer-Verlag, New York, 1985.
	\bibitem{Hamilton}W. R. Hamilton, {\it Elements of quaternions. Part 1}, reprint of the 1866 original, Cambridge Library Collection, Cambridge University Press, Cambridge, 2009.
	\bibitem{Finkelstein}D. Finkelstein, J.M. Jauch, S. Schiminovich and D. Speiser,  {\it Foundations of quaternion quantum mechanics}, J. Mathematical Phys. {\bf 3} (1962), 207--220.
	\bibitem{Ghiloni} {R. Ghiloni, V. Moretti\ and\ A. Perotti},  {\em Continuous slice functional calculus in quaternionic Hilbert spaces},  Rev. Math. Phys. {25} (2013), no.~4, 83 pp.
	\bibitem{Gilfeather} F. Gilfeather, {\em Strong reducibility of operators}, Indiana Univ. Math. J. {\bf 22} (1972/73), 393--397.
	\bibitem{Halmos}P. R. Halmos, \emph{ A Hilbert space problem book}, second edition, Graduate Texts in Mathematics, 19, Springer-Verlag, New York, 1982.
	\bibitem{Hou} B. Hou\ and\ G. Tian, {\it Geometry and operator theory on quaternionic Hilbert spaces}, Ann. Funct. Anal. {\bf 6} (2015).
	\bibitem{Jiang} ZL. Jiang, {\em Topics in Operator Theory.} Jilin, Changchun, China: Jilin University, 1979.
	
	\bibitem{Luo}J. Luo, J. Li\ and\ G. Tian, {\em On a factorization of operators on finite dimensional Hilbert spaces}, Turkish J. Math. {\bf 40} (2016).
	\bibitem{GandP-MultiplicationForm} G. Ramesh \ and \ P. Santhosh Kumar, \emph{Spectral theorem for quaternionic normal operators: Multiplication form}, To appear in Bull. Sci. Math.
	\bibitem{GandP}G. Ramesh\ and\ P. Santhosh Kumar, {\em On the polar decomposition of right linear operators in quaternionic Hilbert spaces}, J. Math. Phys. {\bf 57} (2016), no.~4, 043502, 16 pp.
	\bibitem{Tian} G. Tian, Y. Cao, Y. Ji and J. Li, {\em On a factorization of operators as a product of an essentially unitary operator and a strongly irreducible operator}, J. Math. Anal. Appl. {\bf 429} (2015).
	\bibitem{Weigmann} N. A. Wiegmann, \emph{Some theorems on matrices with real quaternion elements}, Canadian J. Math. {\bf 7} (1955)
	\bibitem{Zhang}F. Zhang, {\it Quaternions and matrices of quaternions}, Linear Algebra Appl. {\bf 251} (1997), 21--57.
\end{thebibliography}
\end{document}